\newtheoremstyle{neu_thm}
{13pt}       
{8pt}      
{\itshape}  
{}          
{\bfseries} 
{.}         
{.5em}      
{}          
\newtheoremstyle{neu_defn}
{13pt}       
{8pt}      
{}  
{}          
{\bfseries} 
{.}         
{.5em}      
{}          
\theoremstyle{neu_thm}
\newtheorem{thm}{Theorem}[section]
\newtheorem{cor}[thm]{Corollary}
\newtheorem{lem}[thm]{Lemma}
\newtheorem{prop}[thm]{Proposition}
\theoremstyle{neu_defn}
\newtheorem{defn}[thm]{Definition}
\newtheorem{rem}[thm]{Remark}
\newtheorem{ex}[thm]{Example}
\titleformat{\section}{\normalfont\bfseries\centering}{\thesection.}{.25em}{}
\titleformat{\subsection}{\normalfont\bfseries}{\thesubsection.}{.25em}{}
\titlespacing{\section}{0pt}{*3}{*1.5}
\titlespacing{\subsection}{0pt}{*4}{*0.5}
\numberwithin{equation}{section}
\newcommand{\R}{\ensuremath{\mathbb R}}    
\newcommand{\C}{\ensuremath{\mathbb C}}    
\newcommand{\N}{\ensuremath{\mathbb N}}    
\newcommand{\Z}{\ensuremath{\mathbb Z}}    
\newcommand{\D}{\ensuremath{\mathbb D}}    
\newcommand{\T}{\ensuremath{\mathbb T}}    
\newcommand{\<}{\langle}
\renewcommand{\>}{\rangle}
\newcommand{\aproduct}{\langle\cdot\,,\cdot\rangle}
         \newcommand{\frakB}{\mathfrak B}
\newcommand{\calD}{\mathcal D}         
\newcommand{\calF}{\mathcal F}         
\newcommand{\calH}{\mathcal H}
\newcommand{\calP}{\mathcal P}
\newcommand{\calS}{\mathcal S}
\newcommand{\scrL}{\mathscr L}
\newcommand{\la}{\lambda}
\newcommand{\veps}{\varepsilon}
\newcommand{\vphi}{\varphi}
\renewcommand{\Re}{\operatorname{Re}}
\newcommand{\linspan}{\operatorname{span}}
\newcommand{\dom}{\operatorname{dom}}
\newcommand{\Sra}{\Rightarrow}
\newcommand{\Slra}{\Leftrightarrow}
\newcommand{\upto}{\uparrow}
\newcommand{\downto}{\downarrow}
\newcommand{\ol}{\overline}
\newcommand{\wh}{\widehat}
\definecolor{darkgreen}{rgb}{0,0.6,0.1}
\newcommand{\supp}{\operatorname{supp}}
\newcommand{\sinc}{\operatorname{sinc}}
\newcommand{\frake}{\mathfrak e}
\newcommand{\braces}[1]{{\rm (}#1{\rm )}}
\newcommand{\arc}{\operatorname{arc}}
\newcommand{\Leb}{\operatorname{Leb}}
\newcommand{\diag}{\operatorname{diag}}
\newcommand{\one}{\mathbbm{1}}
\DeclareMathOperator*{\liim}{l.i.m.}
\newcommand{\BL}{\operatorname{BL}}
\newcommand{\essinf}{\operatorname{essinf}}
\newcommand{\esssup}{\operatorname{esssup}}
\begin{document}
\title[]{Bessel Orbits of Normal Operators}

\author{Friedrich Philipp}
\address{Departamento de Matem\'atica, Facultad de Ciencias Exactas y Naturales, Universidad de Buenos Aires, Ciudad Universitaria, Pabell\'on I, 1428 Buenos Aires, Argentina}
\email{fmphilipp@dm.uba.ar}
\urladdr{http://cms.dm.uba.ar/members/fmphilipp/}

\begin{abstract}
Given a bounded normal operator $A$ in a Hilbert space and a fixed vector $x$, we elaborate on the problem of finding necessary and sufficient conditions under which $(A^kx)_{k\in\N}$ constitutes a Bessel sequence. We provide a characterization in terms of the measure $\|E(\cdot)x\|^2$, where $E$ is the spectral measure of the operator $A$. In the separately treated special cases where $A$ is unitary or selfadjoint we obtain more explicit characterizations. Finally, we apply our results to a sequence $(A^kx)_{k\in\N}$, where $A$ arises from the heat equation. The problem is motivated by and related to the new field of Dynamical Sampling which was recently initiated by Aldroubi et al. in \cite{acmt}.
\end{abstract}

\subjclass[2010]{94A20, 47B15, 30H10, 47B35}
\keywords{Bessel sequence, Dynamical Sampling, Hardy space, Carleson embedding theorem, Hankel matrix, Toeplitz matrix}

\maketitle
\thispagestyle{empty}


\section{Introduction}
Many signals in nature obey a differential equation of the type $\frac{\partial f}{\partial t} = Tf$ with a linear operator $T$. Dynamical Sampling incorporates this knowledge aiming to recover $f$ from spatial subsamples at several times. A prominent example is that of sensors in a forest measuring the temperature in order to prevent or detect forest fire. Usually, many sensors are needed to retrieve an accurate temperature distribution at a specific point of time. Making use of the heat equation as the time evolution law in the background, the hope is to install less sensors but to measure sequentially at different times, which is -- of course -- economically more efficient.

Let us briefly motivate the general Dynamical Sampling setting. Assume that $T$ is a bounded operator in a Hilbert space $\calH$ of functions. If we put $u(t) := f(t,\cdot)$, the differential equation reduces to $\dot u(t) = Tu(t)$ and has the solutions $u(t) = e^{tT}u_0$. Hence, sampling the functions $f(t,\cdot)$ at times $t = 0,1,2,\ldots$ is the same as sampling $u_0$, $Bu_0$, $B^2u_0$, etc., where $B = e^T$. Here, we assume that ``sampling'' $g\in\calH$ means to take scalar products between $g$ and functions $x_i$, $i\in I$, from a fixed system\footnote{If $\calH$ is a reproducing kernel Hilbert space with reproducing kernel $x_t$, then this type of sampling obviously coincides with function evaluation.}. Hence, the measurements have the form $\<B^ku_0,x_i\> = \<u_0,A^kx_i\>$, where $A = B^*$. Since we are aiming at recovering $u_0$ stably from these measurements, we require $(A^kx_i)_{i\in I,\,k=0,\ldots,K_i}$ to be a frame for some numbers $K_i\in\N\cup\{\infty\}$, $i\in I$. Dynamical Sampling is about finding necessary and sufficient conditions on the operator $A$, the vectors $x_i$, and the numbers $K_i$ under which $(A^kx_i)_{i\in I,\,k=0,\ldots,K_i}$ is a frame for $\calH$.

Motivated by works of Vetterli et al. (see \cite{hrlv,lv,rclv}), Aldroubi, Davis, and Krishtal first considered the case where $B$ is a convolution operator \cite{adk} (see also \cite{aadp}). The general Dynamical Sampling problem was tackled in the paper \cite{acmt}, recently followed by the successor \cite{accmp} (see also \cite{ap}). In \cite{acmt} the authors completely describe the finite-dimensional situation and characterize the frame property of $(A^ke)_{k\in\N}$ in the case where $A$ is a selfadjoint operator with eigenvectors forming an orthonormal basis of $\calH$. The paper \cite{accmp} deals with normal operators $A$ and provides several necessary and sufficient conditions under which $(A^ke_i)_{i\in I,\,k=0,\ldots,K_i}$ is minimal or complete. The conditions are formulated in terms of a certain decomposition of the normal operator $A$, taking into account the spectral multiplicity of $A$.

The present paper is motivated by the following simple observation: in order that $(A^ke_i)_{i\in I,\,k=0,\ldots,K_i}$ be a frame for $\calH$ it is necessary that each of the systems $(A^ke_i)_{k=0,\ldots,K_i}$, $i\in I$, is a Bessel sequence. As this is only interesting for those $i\in I$ with $K_i = \infty$, we consider systems of the form $(A^kx)_{k\in\N}$ and ask for which normal operators $A$ and which vectors $x$ the corresponding system is a Bessel sequence. It quickly turns out that the answer to this question solely depends on the properties of the measure $\mu_x := \|E(\cdot)x\|^2$, where $E$ is the spectral measure of the normal operator $A$: it is shown in Lemma \ref{l:translate} that $(A^kx)_{k\in\N}$ is a Bessel sequence if and only if the sequence $(z^k)_{k\in\N}$ of monomials is a Bessel sequence in $L^2(\mu_x)$.

In the case where $A$ is unitary we find a comparatively simple and explicit characterization (see Theorem \ref{t:unitary}): The system $(A^kx)_{k\in\N}$ is a Bessel sequence if and only if the measure $\mu_x$ is Lipschitz continuous (see Definition \ref{d:lipschitz}) with respect to the arc length measure on the unit circle $\T$. For selfadjoint operators $A$ we provide two characterizations in Theorem \ref{t:sa_charac} one of which is even more simple. It states that $(A^kx)_{k\in\N}$ is a Bessel sequence if and only if $\<A^kx,x\> = O(k^{-1})$ as $k\to\infty$. In the proof we make use of results from the theory of Hankel matrices. In the case of general normal operators $A$ it is necessary for $(A^kx)_{k\in\N}$ to be a Bessel sequence that the support of the measure $\mu_x$ lies in the closed unit disc $\ol\D$ (see Lemma \ref{l:necessary}). The Bessel sequence property of $(A^kx)_{k\in\N}$ is now highly dependent on the behaviour of the measure $\mu_x$ close to the unit circle line. In fact, we prove in Theorem \ref{t:hardy} that $(A^kx)_{k\in\N}$ is a Bessel sequence if and only if the support of $\mu_x$ lies in $\ol\D$, $\mu_x|\T$ is Lipschitz continuous with respect to the arc length measure and $\mu_x|\D$ is a Carleson measure. This theorem also provides a characterization in terms of resolvent growth. In the end of the paper we return to the Dynamical Sampling problem in connection with the diffusion operator in the heat equation and prove the negative result that in this case the resulting sequence is not a Bessel sequence.

The paper is organized as follows. In Section 2 we fix our notation and recall some notions that we will use. In Section 3 we present characterizations for $(A^kx)_{k\in\N}$ to be a Bessel sequence in the case of unitary and selfadjoint operators $A$. After that we characterize the Bessel sequence property of $(A^kx)_{k\in\N}$ for general normal operators $A$ in Section 4 and provide some sufficient conditions in Section 5. In Section 6 we show that Dynamical Sampling in a certain framework connected to the heat equation is not possible when the function is sampled at an infinite amount of times. The last section, Section 7, is an appendix containing auxiliary results from measure theory and operator theory which are used in the preceding sections.

\section{Notation, Preliminaries, and Setting}
By $\N$ we denote the natural numbers {\it including zero}, whereas $\N^* := \N\setminus\{0\}$. The Borel $\sigma$-algebra on $\C$ is denoted by $\frakB$. If $\Delta_0\in\frakB$ we set $\frakB(\Delta_0) := \{\Delta\in\frakB : \Delta\subset\Delta_0\}$. By $\T$ we denote the unit circle and by $\D$ the open unit disc in $\C$. Recall that the scalar product on $L^2(\T)$ is given by $\<f,g\> = \int_\T f(z)\ol{g(z)}\,d|z|$, where the arc length measure $\arc := |\cdot|$ is such that $\int_\T d|z| = 1$. By $\Leb_n$ we denote the Lebesgue measure on $\R^n$. For $z\in\C$ and $k\in\Z$ we set
\begin{equation}\label{e:e_k}
e_k(z) := 
\begin{cases}
z^k&\text{if }k\ge 0,\\
\ol z^{-k}&\text{if }k < 0.
\end{cases}
\end{equation}
Hence, for $z\in\T$ we have $e_k(z) = z^k$ and for $z\in\R$, $e_k(z) = z^{|k|}$, $k\in\Z$. In the following, we shall restrict the function $e_k$ on $\T$ and $\D$, respectively. If no confusion can arise, we write $e_k$ instead of $e_k|\T$ or $e_k|\D$. For example, the system $(e_k)_{k\in\Z}\subset L^2(\T)$ is an orthonormal basis of $L^2(\T)$. Therefore, each $f\in L^2(\T)$ has a representation $f = \sum_{k\in\Z}c_ke_k$ with $(c_k)_{k\in\Z}\in\ell^2(\Z)$.

One of the several equivalent definitions of the {\it Hardy space} $H^2(\D)$ is as follows:
\begin{align*}
H^2(\D) := \left\{f : \D\to\C\,\left|\,f(z) = \sum_{k=0}^\infty c_kz^k,\;(c_k)_{k\in\N}\in\ell^2(\N)\right\}\right..
\end{align*}
It is well known that if $f\in H^2(\D)$, $f(z) = \sum_{k=0}^\infty c_kz^k$, then $\lim_{r\upto 1}f(rz)$ exists for almost every $z\in\T$ and that the limit function $\tilde f$ (where we set $\tilde f(z) = 0$ if $\lim_{r\upto 1}f(rz)$ does not exist) is in the $L^2(\T)$-equivalence class $\sum_{k=0}^\infty c_ke_k$. By a famous theorem of Carleson (see \cite{ca}) the series $\sum_{k=0}^\infty c_kz^k$ converges for a.e.\ $z\in\T$. Consequently, we have that $\tilde f(z) = \sum_{k=0}^\infty c_kz^k$ for a.e.\ $z\in\T$. Equipped with the scalar product
$$
\<f,g\>_{H^2(\D)} := \big\<\tilde f,\tilde g\big\>_{L^2(\T)},\qquad f,g\in H^2(\D),
$$
the space $H^2(\D)$ becomes a Hilbert space. In the following we shall also consider the space $H^2(\ol\D)$ consisting of the functions $f : \ol\D\to\C$ such that $f|\D\in H^2(\D)$ and $f|\T = \widetilde{f|\D}$. For $f,g\in H^2(\ol\D)$ we set
$$
\<f,g\>_{H^2(\ol\D)} := \<f|\D,g|\D\>_{H^2(\D)}.
$$
Then also $H^2(\ol\D)$ is a Hilbert space.

Recall that a sequence $(y_k)_{k\in\N}$ of vectors in a Hilbert space $\calH$ is called a {\it Bessel sequence} if there exists $C > 0$ such that
$$
\sum_{k\in\N}|\<y,y_k\>|^2\,\le\,C\|y\|^2\quad\text{for all }y\in\calH.
$$
A constant $C > 0$ as above is called a {\it Bessel bound} for the Bessel sequence $(y_k)_{k\in\N}$. The following well known proposition provides necessary and sufficient conditions for $(y_k)_{k\in\N}$ to be a Bessel sequence.

\begin{prop}[\cite{c}]\label{p:charac_bessel}
Let $(y_k)_{k\in\N}$ be a sequence of vectors in $\calH$ and $C > 0$. Then the following statements are equivalent.
\begin{enumerate}
\item[{\rm (i)}]   $(y_k)_{k\in\N}$ is a Bessel sequence in $\calH$ \braces{with Bessel bound $C$}.
\item[{\rm (ii)}]  $\sum_{k\in\N}c_ky_k$ converges in $\calH$ for each $(c_k)_{k\in\N}\in\ell^2(\N)$ {\rm (}and
\begin{equation}\label{e:synthesis}
\left\|\sum_{k\in\N}c_ky_k\right\|\,\le\,C^{1/2}\|c\|_2\quad\text{for all }c = (c_k)_{k\in\N}\in\ell^2(\N).)
\end{equation}
\item[{\rm (iii)}] $\sum_{k\in\N}c_ky_k$ converges unconditionally in $\calH$ for each $(c_k)_{k\in\N}\in\ell^2(\N)$ \braces{and \eqref{e:synthesis} holds}.
\item[{\rm (iv)}]  The infinite {\em Gram matrix} $(\<y_k,y_j\>)_{k,j\in\N}$ defines a bounded linear operator on $\ell^2(\N)$ \braces{with operator norm at most $C$}.
\end{enumerate}
\end{prop}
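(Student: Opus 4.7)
The four conditions are each equivalent to a boundedness/norm statement about one of three natural operators attached to $(y_k)$: the analysis operator $T:\calH\to\C^\N$ with $Ty=(\<y,y_k\>)_{k\in\N}$; the (a priori formal) synthesis operator $S$, sending a finitely supported sequence $c=(c_k)$ to $\sum_k c_ky_k\in\calH$; and the Gram operator $G$ on $\ell^2(\N)$ whose matrix is $(\<y_k,y_j\>)_{k,j\in\N}$. The strategy is to establish that $T$ and $S$ are formally adjoint, that $G=S^*S$, and then to read off the equivalences (i)--(iv) from a dictionary between these three objects.

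Concretely, (i) with constant $C$ is just the statement that $T:\calH\to\ell^2(\N)$ is bounded with $\|T\|^2\le C$. The formal-adjoint identity
\[
\<Sc,y\>_\calH\,=\,\sum_k c_k\<y_k,y\>\,=\,\<c,Ty\>_{\ell^2},
\]
valid for every finitely supported $c$ and every $y\in\calH$, shows that $S$ is a restriction of $T^*$. Hence (i) implies that $S$ extends to a bounded operator $\ell^2(\N)\to\calH$ with $\|S\|=\|T\|\le C^{1/2}$, which is (ii); and, conversely, if $S$ extends boundedly with $\|S\|\le C^{1/2}$, then $T=S^*$ is bounded with the same norm, giving (i). For the Gram-matrix condition, a direct computation gives $\<Gc,d\>_{\ell^2}=\<Sc,Sd\>_\calH$ for finitely supported $c,d$, so $G$ agrees with $S^*S$ on a dense subspace. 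Thus $G$ extends to a bounded operator on $\ell^2(\N)$ with $\|G\|\le C$ if and only if $\|S\|^2\le C$, proving (ii)$\Leftrightarrow$(iv).

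It remains to address (ii)$\Leftrightarrow$(iii). The implication (iii)$\Rightarrow$(ii) is trivial. For (ii)$\Rightarrow$(iii), applying (ii) to the sequence $c\cdot\one_F$ gives
\[
\Big\|\sum_{k\in F}c_ky_k\Big\|\,\le\,C^{1/2}\Big(\sum_{k\in F}|c_k|^2\Big)^{1/2}
\]
for every finite $F\subset\N$, so the net $\bigl(\sum_{k\in F}c_ky_k\bigr)_F$ indexed by finite subsets of $\N$ is Cauchy in $\calH$; its limit is necessarily the same as that of any sequential partial sum, and this is unconditional convergence.

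No real obstacle is present: the proposition is essentially a dictionary between four encodings of one bounded operator. The only point requiring care is that $S$ is a priori defined only on finitely supported sequences, so in the direction (i)$\Rightarrow$(ii) one must first use the formal-adjoint identity to extend $S$ to all of $\ell^2(\N)$ before claiming convergence of the series $\sum_k c_ky_k$ for arbitrary $\ell^2$-coefficients.
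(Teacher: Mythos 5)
The paper states this proposition without proof, citing it as a standard result from Christensen's book on frames, so there is no in-paper argument to compare against; your proof is correct and is essentially the textbook argument (analysis operator $T$, synthesis operator $S=T^*$, Gram operator $G=S^*S$). You also correctly flag the one genuine care point, namely extending $S$ from finitely supported sequences before asserting convergence of $\sum_k c_ky_k$, and your Cauchy-net argument for unconditional convergence in (ii)$\Rightarrow$(iii) is sound.
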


Throughout this paper, let $A$ be a bounded normal operator in a Hilbert space $\calH$. We consider sequences of the form $(A^kx)_{k\in\N}$ with $x\in\calH$ and ask the question as to whether this sequence is a Bessel sequence. With regard to this problem, the spectrum and the spectral measure of $A$ play an important role. We denote them by $\sigma(A)$ and $E$, respectively. If $x,y\in\calH$, by $\mu_{xy}$ we denote the complex measure
$$
\mu_{xy}(\Delta) := \<E(\Delta)x,y\>,\quad\Delta\in\frakB.
$$
It is clear that from the knowledge of the finite positive Borel measure\footnote{For us, a Borel measure is a measure defined on the Borel $\sigma$-algebra of a topological space.} $\mu_x := \mu_{xx}$ we cannot recover $A$ and/or $x$. However, in our studies it will turn out that from this knowledge alone we can decide on whether $(A^kx)_{k\in\N}$ is a Bessel sequence or not. Hence, the answer to the question is intimately connected with the properties of the measure $\mu_x$. The following lemma already hints at this connection. Recall that the support of a measure is the set of all points for which every open neighborhood has positive measure.

\begin{lem}\label{l:necessary}
Let $A$ be a bounded normal operator in $\calH$ and $x\in\calH$ such that $(A^kx)_{k\in\N}$ is a Bessel sequence. Then
$$
x\in E(\ol\D)\calH\quad (\text{i.e., }\supp(\mu_x)\,\subset\,\ol\D).
$$
\end{lem}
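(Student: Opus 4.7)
The plan is to exploit the fact that a Bessel sequence has uniformly bounded norms, and translate this via the spectral theorem into a uniform bound on the moments $\int_{\sigma(A)}|z|^{2k}\,d\mu_x(z)$. Once we have such a bound, the support of $\mu_x$ cannot stick out of $\ol\D$, since otherwise one of these moments would blow up geometrically.

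Concretely, first I would record the elementary fact that any Bessel sequence $(y_k)_{k\in\N}$ with Bessel bound $C$ satisfies $\|y_k\|^2 \le C$ for all $k$: test the Bessel inequality with $y = y_k$ itself, obtaining $\|y_k\|^4 = |\<y_k,y_k\>|^2 \le \sum_j|\<y_k,y_j\>|^2 \le C\|y_k\|^2$. Applying this to $y_k = A^kx$ yields $\|A^kx\|^2 \le C$ for every $k\in\N$. By the spectral theorem for the bounded normal operator $A$,
\[
\|A^kx\|^2 \;=\; \int_{\sigma(A)}|z|^{2k}\,d\mu_x(z),
\]
so $\int_{\sigma(A)}|z|^{2k}\,d\mu_x(z)\le C$ uniformly in $k$.

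Now I argue by contradiction. Suppose $\supp(\mu_x)$ is not contained in $\ol\D$, i.e., there exists $z_0\in\supp(\mu_x)$ with $|z_0|>1$. Pick $r$ with $1<r<|z_0|$ and an open neighbourhood $U$ of $z_0$ (for example a small disc around $z_0$) such that $|z|\ge r$ for every $z\in U$. By the definition of support, $\mu_x(U)>0$. Then
\[
\int_{\sigma(A)}|z|^{2k}\,d\mu_x(z) \;\ge\; \int_{U}|z|^{2k}\,d\mu_x(z) \;\ge\; r^{2k}\mu_x(U) \;\lra\; \infty
\]
as $k\to\infty$, which contradicts the uniform bound. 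Hence $\supp(\mu_x)\subset\ol\D$, which is equivalent to $\mu_x(\C\setminus\ol\D)=0$, and this in turn means $E(\C\setminus\ol\D)x=0$, i.e., $x\in E(\ol\D)\calH$.

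No real obstacle is anticipated; the only care is to verify that the two formulations in the conclusion (membership in $E(\ol\D)\calH$ and the inclusion $\supp(\mu_x)\subset\ol\D$) are equivalent, which follows from the openness of $\C\setminus\ol\D$ together with the general fact that $\mu_x$ vanishes off its support.
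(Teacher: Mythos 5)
Your proof is correct and follows essentially the same route as the paper: both arguments rest on the fact that a Bessel sequence has uniformly bounded norms, so that $\|A^nx\|^2=\int|z|^{2n}\,d\mu_x$ stays bounded, while any mass of $\mu_x$ outside $\ol\D$ would force these moments to grow geometrically. The only (harmless) difference is that you test the Bessel inequality with the sequence members $A^nx$ themselves, whereas the paper tests it with $A^nx_r$ for $x_r=E(\C\setminus\D_r)x$ (which requires the extra observation that $\<A^nx_r,A^{k+n}x_r\>=\<A^nx_r,A^{k+n}x\>$) and concludes $x_r=0$ directly rather than by contradiction; your variant is, if anything, slightly more streamlined.
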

\begin{proof}
For $r > 0$ put $\D_r := \{z\in\C : |z| < r\}$ and $x_r := E(\C\setminus\D_r)x$. Let $C > 0$ be the Bessel bound of $(A^kx)_{k\in\N}$. For arbitrary $r > 1$ and $n\in\N$ we have
$$
\left\|A^nx_r\right\|^4\,\le\,\sum_{k=0}^\infty\left|\left\<A^nx_r,A^{k+n}x_r\right\>\right|^2 = \sum_{k=0}^\infty\left|\left\<A^nx_r,A^{k+n}x\right\>\right|^2\,\le\,C\left\|A^nx_r\right\|^2,
$$
and hence $C\ge \|A^nx_r\|^2$. But
$$
\left\|A^nx_r\right\|^2 = \int_{\C\setminus\D_r}|z|^{2n}d\mu_{x}\,\ge\,r^{2n}\mu_{x}(\C\setminus\D_r) = r^{2n}\|x_r\|^2.
$$
Letting $n\to\infty$ we conclude that $x_r = 0$ for each $r > 1$. This proves the claim.
\end{proof}

For $x\in\calH$ we shall also set
$$
\calH_x := \ol\linspan\left\{E(\Delta)x : \Delta\in\frakB\right\}.
$$
The orthogonal projection in $\calH$ onto $\calH_x$ will be denoted by $P_x$. For statements on the interplay between $\calH_x$ and the measures $\mu_{xy}$ and $\mu_x$ we refer the reader to Subsection \ref{ss:aux_op}.

\section{Unitary and Selfadjoint Operators}
In this section we shall consider special cases of normal operators, namely the unitary and selfadjoint ones. In these cases our characterizations for $(A^kx)_{k\in\N}$ to be a Bessel sequence will be especially simple and explicit. For the formulation of our characterization for unitary operators in Theorem \ref{t:unitary} below we introduce the notion of Lipschitz continuity of set functions.

\begin{defn}\label{d:lipschitz}
Let $\nu,\mu : \Sigma\to\R$ be non-negative set functions on a measurable space $(\Omega,\Sigma)$. We say that $\nu$ is {\em Lipschitz continuous with respect to $\mu$} {\rm (}or {\em $\mu$-Lipschitz continuous}{\rm )} with constant $C > 0$ if for all $\Delta\in\Sigma$ we have
$$
\nu(\Delta)\,\le\,C\mu(\Delta).
$$
\end{defn}

Note that for measures $\nu$ and $\mu$ (just as for functions) Lipschitz continuity is a stronger notion than absolute continuity. In fact, Lemma \ref{l:liphoelder} shows that $\nu$ is $\mu$-Lipschitz continuous if and only if $\nu\ll\mu$ and $d\nu/d\mu\in L^\infty(\mu)$.

\subsection{Unitary Operators}
In the proof of the following theorem we make use of the auxiliary results in Section \ref{s:aux}. Recall that $\arc = |\cdot|$ denotes the arc length measure on $\T$.

\begin{thm}\label{t:unitary}
Let $A$ be a unitary operator in $\calH$, $x\in\calH$, and $C > 0$. Then the following statements are equivalent.
\begin{enumerate}
\item[{\rm (i)}]   The sequence $(A^kx)_{k\in\N}$ is a Bessel sequence with Bessel bound $C$.
\item[{\rm (ii)}]  The sequence $(A^kx)_{k\in\Z}$ is a Bessel sequence with Bessel bound $C$.
\item[{\rm (iii)}] $\mu_x$ is Lipschitz continuous with respect to $\arc$ with constant $C$.
\item[{\rm (iv)}]  For any $y\in\calH$ the measure $|\mu_{xy}|$ is Lipschitz continuous with respect to $\sqrt{\mu_y\arc}$ with constant $C^{1/2}$.
\item[{\rm (v)}] $\|(A - \la)^{-1}x\|^2\le C\big|1-|\la|^2\big|^{-1}$ for $|\la|\neq 1$.
\end{enumerate}
\end{thm}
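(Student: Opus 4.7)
The plan is to take (iii) as the structural hub of the proof and establish the cycle $(\text{iii})\Rightarrow(\text{ii})\Rightarrow(\text{i})\Rightarrow(\text{v})\Rightarrow(\text{iii})$ together with the separate equivalence $(\text{iii})\Leftrightarrow(\text{iv})$. The first move is to apply Lemma~\ref{l:translate}, which recasts (i) and (ii) as the statements that $(e_k)_{k\in\N}$ and $(e_k)_{k\in\Z}$ respectively are Bessel sequences in $L^2(\mu_x)$ with bound $C$; since $A$ is unitary, $\supp\mu_x\subset\T$, so all computations take place on the circle. For $(\text{iii})\Rightarrow(\text{ii})$ I would write $d\mu_x = h\,d\arc$ with $0\le h\le C$ a.e.\ (Lemma~\ref{l:liphoelder}), observe that $fh\in L^2(\arc)$ for every $f\in L^2(\mu_x)$, and use $\<f,e_k\>_{L^2(\mu_x)} = \<fh,e_k\>_{L^2(\arc)}$ together with Parseval in the ONB $(e_k)_{k\in\Z}\subset L^2(\arc)$ to conclude
$$
\sum_{k\in\Z}|\<f,e_k\>_{L^2(\mu_x)}|^2 = \|fh\|_{L^2(\arc)}^2\,\le\, C\|f\|_{L^2(\mu_x)}^2.
$$
Then $(\text{ii})\Rightarrow(\text{i})$ is immediate, and $(\text{iii})\Leftrightarrow(\text{iv})$ reduces to the Cauchy--Schwarz bound $|\mu_{xy}(\Delta)|=|\<E(\Delta)x,E(\Delta)y\>|\le(\mu_x(\Delta)\mu_y(\Delta))^{1/2}$, upgraded to $|\mu_{xy}|$ by a standard partition argument: inserting (iii) gives (iv), while setting $y=x$ in (iv) and squaring gives (iii).

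The core work is the chain $(\text{i})\Rightarrow(\text{v})\Rightarrow(\text{iii})$. For $(\text{i})\Rightarrow(\text{v})$ on $\{|\la|>1\}$ I would use the Neumann series $(A-\la)^{-1}x=-\sum_{k=0}^\infty\la^{-(k+1)}A^kx$ (convergent since $\|A\|=1$), which by Proposition~\ref{p:charac_bessel}(ii) with coefficients $c_k=-\la^{-(k+1)}\in\ell^2(\N)$ satisfies $\|(A-\la)^{-1}x\|^2\le C\sum_{k=0}^\infty|\la|^{-2(k+1)} = C/(|\la|^2-1)$. To transport this resolvent bound to a statement on $\mu_x$, I would rewrite the left hand side by the spectral theorem as $\int_\T|z-\la|^{-2}\,d\mu_x(z)$ and invoke the elementary identity $|z-\la|^2=|\la|^2|z-1/\bar\la|^2$ valid for $z\in\T$ (direct verification by expansion) to obtain the Poisson-kernel representations
$$
\|(A-\la)^{-1}x\|^2 = \frac{P[\mu_x](1/\bar\la)}{|\la|^2-1}\ (|\la|>1),\qquad \|(A-\la)^{-1}x\|^2 = \frac{P[\mu_x](\la)}{1-|\la|^2}\ (|\la|<1),
$$
where $P[\mu_x]$ denotes the Poisson integral of $\mu_x$.

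These identities reduce (v) on $\{|\la|>1\}$ to the single condition $\sup_{w\in\D}P[\mu_x](w)\le C$. The classical Poisson characterization $\sup_{\D}P[\mu] = \|d\mu/d\arc\|_{L^\infty(\arc)}$ for positive Borel measures on $\T$ (with the right hand side $=+\infty$ when $\mu$ has a nontrivial $\arc$-singular part), to be invoked as an appendix lemma, then combines with Lemma~\ref{l:liphoelder} to give $\mu_x\le C\arc$, i.e., (iii). Running the same Poisson identities in reverse establishes $(\text{iii})\Rightarrow(\text{v})$ on all of $\C\setminus\T$, closing the cycle. The main obstacle is thus the Poisson-integral characterization of a bounded Radon--Nikodym derivative; once that is isolated in the appendix, everything else reduces to direct spectral calculus and soft Cauchy--Schwarz estimates.
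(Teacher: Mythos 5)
Your proposal is correct, but it routes the equivalences differently from the paper, and the differences are worth recording. The paper's cycle is $(\mathrm{i})\Rightarrow(\mathrm{ii})\Rightarrow(\mathrm{iii})\Rightarrow(\mathrm{iv})\Rightarrow(\mathrm{i})$, with (v) attached afterwards via two separate implications $(\mathrm{iii})\Rightarrow(\mathrm{v})$ (a Poisson-kernel estimate) and $(\mathrm{v})\Rightarrow(\mathrm{iii})$ (Stieltjes' inversion formula, Lemma~\ref{l:stieltjes}, plus regularity of $\mu_x$); its $(\mathrm{i})\Rightarrow(\mathrm{ii})$ is the unitary shift trick $\sum_{k\ge -m}|\langle y,A^kx\rangle|^2=\sum_{k\ge 0}|\langle A^my,A^kx\rangle|^2$, and its $(\mathrm{ii})\Rightarrow(\mathrm{iii})$ identifies $d\mu_x/d|z|$ as an $L^2(\T)$ function through the Fourier coefficients $\int_\Delta z^{-k}\,d\mu_x$ and a Riesz-representation argument. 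You instead make (v) load-bearing: $(\mathrm{i})\Rightarrow(\mathrm{v})$ on $\{|\la|>1\}$ via the Neumann series and the synthesis bound of Proposition~\ref{p:charac_bessel}(ii), then $(\mathrm{v})\Rightarrow(\mathrm{iii})$ via the identity $\|(A-\la)^{-1}x\|^2=P[\mu_x](1/\bar\la)/(|\la|^2-1)$ and the classical fact $\sup_\D P[\mu]=\|d\mu/d\arc\|_{L^\infty}$; and your $(\mathrm{iii})\Rightarrow(\mathrm{ii})$ is a clean Parseval computation with the density $h$, while your $(\mathrm{iv})\Rightarrow(\mathrm{iii})$ (set $y=x$ and square) is simpler than the paper's $(\mathrm{iv})\Rightarrow(\mathrm{i})$. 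This buys a shorter and more conceptual path from (i) to (iii) at the cost of importing the boundedness characterization of positive Poisson integrals, whose proof contains essentially the same Stieltjes-inversion content the paper isolates in Lemma~\ref{l:stieltjes}; note also that this result must be stated with the paper's normalization $|\T|=1$ so that $P[\arc]\equiv 1$. Two small points to tidy up: Lemma~\ref{l:translate} is stated only for $(e_k)_{k\in\N}$, so your use of it for $(e_k)_{k\in\Z}$ requires the one-line extension $\langle y,A^kx\rangle=\langle f_y,e_k\rangle_{L^2(\mu_x)}$ for $k<0$ (valid since $\supp\mu_x\subset\T$ and $A^{-1}=A^*$); and you should state explicitly that your cycle is $(\mathrm{i})\Rightarrow(\mathrm{v}|_{|\la|>1})\Rightarrow(\mathrm{iii})\Rightarrow(\mathrm{v})$, since the Neumann series alone does not reach $|\la|<1$ --- as written the logic closes, but the reader should not think $(\mathrm{i})\Rightarrow(\mathrm{v})$ is proved in one step.
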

\begin{proof}
(i)$\Sra$(ii). Let $y\in\calH$. Then for each $m\in\N$ we have
$$
\sum_{k=-m}^\infty\left|\left\<y,A^kx\right\>\right|^2 = \sum_{k=0}^\infty\left|\left\<A^my,A^kx\right\>\right|^2\,\le\,C\|A^my\|^2 = C\|y\|^2.
$$
(ii)$\Sra$(iii). For each $\Delta\in\frakB(\T)$ we have
\begin{equation}\label{e:ff}
\sum_{k\in\Z}\left|\int_\Delta z^{-k}\,d\mu_{x}\right|^2 = \sum_{k\in\Z}\left|\left\<E(\Delta)x,A^kx\right\>\right|^2\,\le\,C\|E(\Delta)x\|^2 = C\mu_x(\Delta).
\end{equation}
This shows that the sequence $(\int_\Delta z^{-k}\,d\mu_{x})_{k\in\Z}$ is an element of $\ell^2(\Z)$ for each $\Delta\in\frakB(\T)$. Now, define $f := \sum_{k\in\Z}(\int_\T z^{-k}\,d\mu_{x})e_k\in L^2(\T)$ and $d\nu := f\,d|z|$. Then for each $k\in\Z$,
$$
\int_\T z^{-k}\,d\mu_{x} = \<f,e_k\> = \int_\T f(z)z^{-k}\,d|z| = \int_\T z^{-k}\,d\nu.
$$
By the Stone-Weierstra\ss\ Theorem, $\linspan\{z^k : k\in\Z\}$ is dense in $C(\T)$. Hence, the functional $\vphi\in C(\T)'$, defined by $\vphi(g) := \int_\T g\,d(\mu_{x} - \nu)$, vanishes identically. Now, it is a consequence of Riesz' representation theorem for $C(\T)'$ that $\mu_{x} = \nu$ (see also \cite[p.\ 36]{k}). Therefore, $d\mu_{x}/d|z| = f$. For $\Delta\in\frakB(\T)$ we have
$$
\left\<\chi_\Delta f,e_k\right\> = \int_\Delta f(z)z^{-k}\,d|z| = \int_\Delta z^{-k}\,d\mu_{x}.
$$
Thus, it follows from \eqref{e:ff} that $\|f|\Delta\|_2^2\le C\mu_x(\Delta)$. Hence, Lemma \ref{l:lipschitz} implies that $\mu_x$ is Lipschitz continuous with respect to $\sqrt{\mu_x\arc}$ with constant $C^{1/2}$, which is (iii).

(iii)$\Sra$(iv). By Lemma \ref{l:mu_yx}, for any $y\in\calH$ we have that $|\mu_{yx}|(\Delta)\le\sqrt{\mu_y(\Delta)\mu_x(\Delta)}$, $\Delta\in\frakB$. Thus, if $\mu_x(\Delta)\le C|\Delta|$ for each $\Delta\in\frakB(\T)$ then $|\mu_{yx}|(\Delta)\le C^{1/2}\sqrt{\mu_y(\Delta)|\Delta|}$.

(iv)$\Sra$(i). Let $y\in\calH$. By Lemma \ref{l:lipschitz} we have that $f := d\mu_{yx}/d|z|\in L^2(\T)$ and $\|f\|_2^2\le C\mu_y(\T) = C\|y\|^2$. This shows $\<f,e_k\> = \int_\T f(z)z^{-k}\,d|z| = \int_\T z^{-k}\,d\mu_{yx}$ for $k\in\Z$ and thus
$$
\sum_{k\in\Z}\left|\left\<y,A^kx\right\>\right|^2 = \sum_{k\in\Z}\left|\int_\T z^{-k}d\mu_{yx}\right|^2 = \sum_{k\in\Z}|\<f,e_k\>|^2 = \|f\|_2^2\,\le\,C\|y\|^2.
$$
(iii)$\Sra$(v). In the rest of the proof we make use of the Poisson kernel $P$ and the Poisson integral $\calP[\cdot]$ (see page \pageref{page:poisson}). By Lemma \ref{l:liphoelder}, we have that $\mu_x = f\,d|z|$, where $f\in L^\infty(\T)$ with $\|f\|_\infty\le C$. Hence, for $\la\in\D$,
\begin{align*}
\|(A - \la)^{-1}x\|^2
&= \int_\T |z-\la|^{-2}\,d\mu_x(z)\,\le\, C\int_\T |z-\la|^{-2}\,d|z|\\
&= \frac C{1-|\la|^2}\int_\T\frac{1-|\la|^2}{|z-\la|^2}\,d|z| = \frac C{1-|\la|^2}\calP[\arc](\la) = \frac C{1-|\la|^2}.
\end{align*}
Making use of $P(\ol w^{-1},z) = -P(w,z)$, one similarly shows that $\|(A - \la)^{-1}x\|\le C(|\la|^2-1)^{-1}$ for $\la\in\C\setminus\ol\D$.

(v)$\Sra$(iii). Let $\Delta$ be an open arc in $\T$. For $r\in (0,1)$ we have
\begin{align*}
\int_\Delta\calP[\mu_x](rz)\,d|z|
&= (1-r^2)\int_\Delta\int_\T\frac 1 {|\la- rz|^2}\,d\mu_x(\la)\,d|z|.
\end{align*}
Define the continuous function $g_z:\T\to\C$ by $g_z(\la) := (\la-rz)^{-1}$, $z,\la\in\T$. Then $\int|g_z|^2\,d\mu_x = \|\int g_z\,dEx\|^2 = \|g_z(A)x\|^2 = \|(A - rz)^{-1}x\|^2$. Hence,
\begin{align*}
\int_\Delta\calP[\mu_x](rz)\,d|z|
&= (1-r^2)\int_\Delta\|(A - rz)^{-1}x\|^2\,d|z|\,\le\,C|\Delta|,
\end{align*}
where the last inequality follows from (v). Using Stieltjes' inversion formula (Lemma \ref{l:stieltjes}) we conclude that $\mu_x(\Delta)\le C|\Delta|$ for every open arc in $\T$. Thus, the latter relation holds for every (relatively) open set $\Delta\subset\T$. Now, (iii) follows from the fact that $\mu_x$ is a regular measure (see, e.g., \cite[Theorem 2.18]{r}).
\end{proof}

\begin{rem}
(a) Note that (iii) in Theorem \ref{t:unitary} implies that the function $f = d\mu_x/d|z|\in L^2(\T)$, defined in the proof of (ii)$\Sra$(iii), is in fact an element of $L^\infty(\T)$ with $\|f\|_\infty\le C$, see Lemma \ref{l:liphoelder}.

\medskip\noindent
(b) Alternatively, we can prove the equivalence (i)$\Slra$(iii) in Theorem \ref{t:unitary} with the help of Toeplitz matrix theory. Indeed, by Proposition \ref{p:charac_bessel} $(A^kx)_{k\in\N}$ is a Bessel sequence with bound $C$ if and only if its Gram matrix $G = (\<A^kx,A^jx\>)_{j,k=0}^\infty$ defines a bounded operator in $\ell^2(\N)$ with norm at most $C$. In the present case, $G = (\<x,A^{j-k}x\>)_{j,k=0}^\infty$ is a Toeplitz matrix, which, by a theorem of Toeplitz (see, e.g., \cite{bg}), defines a bounded operator on $\ell^2(\N)$ with norm at most $C$ if and only if there exists some $f\in L^\infty(\T)$, $\|f\|_\infty\le C$, such that $\<x,A^kx\> =  \hat f(k)$ for all $k\in\Z$, i.e., $\int_\T z^{-k}\,d\mu_x = \int_\T f(z)z^{-k}\,d|z|$. By \cite[p.\ 36]{k} this is equivalent to $d\mu_x = f\,d|z|$ with $f\in L^\infty(\T)$, and thus to (iii), see Lemma \ref{l:liphoelder}.

\medskip\noindent
(c) As seen above, if $A$ is unitary, $(A^kx)_{k\in\N}$ is a Bessel sequence if and only if its Gram matrix $G$ has the form $T_f$, where $f\in L^\infty(\T)$, $f\ge 0$, is the symbol of the Toeplitz matrix $T_f$. By \cite{d}, the spectrum of $G = T_f$ is given by $[\essinf f,\esssup f]$. In particular, the Gram operator of $(A^kx)_{k\in\N}$ is either invertible or its range is not closed. In turn, $(A^kx)_{k\in\N}$ is a frame sequence (i.e., a frame for its closed linear span) if and only  if it is a Riesz basic sequence. This holds if and only if $f$ is essentially bounded below by a positive constant, or, equivalently, if and only if $\mu_x$ and $\arc$ are Lipschitz equivalent (i.e., there exist $c,C > 0$ such that $c|\Delta|\le\mu_x(\Delta)\le C|\Delta|$ for all $\Delta\in\frakB(\T)$). For this it is necessary that the unitary operator $A$ be non-reductive (see \cite{wer}).
\end{rem}

\subsection{Selfadjoint Operators}
Let us now proceed with the case where the operator $A$ is selfadjoint. Assume for the moment that $(A^kx)_{k\in\N}$ is a Bessel sequence with bound $C > 0$. Let $\veps\in (0,1)$ and put $x_\veps := E((1-\veps,\infty))x$. Then
$$
C\|x_\veps\|^2\ge\sum_{k\in\N}\left|\left\<x_\veps,A^kx_\veps\right\>\right|^2 \ge\sum_{k\in\N}(1-\veps)^{2k}\|x_\veps\|^4 = \frac{\|x_\veps\|^4}{1 - (1-\veps)^2},
$$
and hence $\mu_x((1-\veps,\infty)) = \|x_\veps\|^2\le C\veps(2 - \veps)\le 2C\veps$. Simililarly, one shows that $\mu_x((-\infty,-1+\veps))\le 2C\veps$. That is, we have $\mu_x(|t| > 1-\veps) = O(\veps)$ as $\veps\to 0$. Our characterization result Theorem \ref{t:sa_charac} shows in particular that this necessary condition is in fact also sufficient for $(A^kx)_{k\in\N}$ to be a Bessel sequence. In the proof we observe that the problem is strongly connected with the boundedness of a certain Hankel matrix. Recall that a Hankel matrix is an infinite matrix of the form $H = (a_{k+j})_{j,k\in\N}$, where $(a_k)_{k\in\N}$ is a sequence of, in general, complex numbers. Nehari showed in \cite{n} that $H$ defines a bounded operator on $\ell^2(\N)$ if and only if there exists a bounded measurable function $\psi$ on $\T$ such that $a_k = \<\psi,e_k\>_{L^2(\T)}$ for $k\in\N$. However, the next theorem is much better suited for our purposes. It was proved in \cite[Theorem 3.1]{w} (see also \cite[Theorem 1.3]{y}).

\begin{thm}\label{t:hankel}
Let $\mu$ be a finite positive measure on $\R$ such that $\mu(|t|\ge 1) = 0$ and define
$$
q_k := \int_{(-1,1)} t^k\,d\mu(t),\quad k\in\N.
$$
Then the following statements are equivalent.
\begin{enumerate}
\item[{\rm (i)}]   The Hankel matrix $(q_{k+j})_{j,k\in\N}$ defines a bounded operator in $\ell^2(\N)$.
\item[{\rm (ii)}]  $\mu(|t| > 1-\veps) = O(\veps)$ as $\veps\to 0$.
\item[{\rm (iii)}] $q_k = O(k^{-1})$ as $k\to\infty$.
\end{enumerate}
\end{thm}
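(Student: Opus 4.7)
The plan is to establish the cyclic chain (ii) $\Rightarrow$ (iii) $\Rightarrow$ (i) $\Rightarrow$ (ii). The step (ii) $\Rightarrow$ (iii) is a quick Beta-function estimate. Letting $\nu$ denote the pushforward of $\mu$ under $t\mapsto|t|$ and setting $G(s):=\nu((s,1])$, the hypothesis reads $G(s)\le C(1-s)$ for $s\in[0,1)$. An integration by parts together with this bound yields
\begin{equation*}
|q_k| \,\le\, \int_0^1 s^k\,d\nu(s) \,=\, k\int_0^1 s^{k-1}G(s)\,ds \,\le\, Ck\int_0^1 s^{k-1}(1-s)\,ds \,=\, \frac{C}{k+1},
\end{equation*}
which is (iii).

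For (iii) $\Rightarrow$ (i), assume $|q_k|\le C_0/(k+1)$. Then for finitely supported $c,d\in\ell^2(\N)$ the Hankel bilinear form is dominated entrywise by the Hilbert matrix, so
\begin{equation*}
\bigg|\sum_{j,k\ge 0} q_{j+k}c_k\overline{d_j}\bigg| \,\le\, C_0\sum_{j,k\ge 0}\frac{|c_k|\,|d_j|}{j+k+1} \,\le\, C_0\pi\,\|c\|_2\|d\|_2,
\end{equation*}
by the classical Hilbert inequality (operator norm $\pi$ of the Hilbert matrix on $\ell^2(\N)$). Since finitely supported sequences are dense in $\ell^2(\N)$, this yields boundedness of the Hankel operator with entries $q_{k+j}$.

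The crux is (i) $\Rightarrow$ (ii). Here one reinterprets the Hankel form as an integral: for a finitely supported sequence $c=(c_k)$ with associated polynomial $p(t):=\sum c_kt^k$, Fubini gives
\begin{equation*}
\sum_{j,k\ge 0} q_{j+k}c_k\overline{c_j} \,=\, \int_{(-1,1)}|p(t)|^2\,d\mu(t),
\end{equation*}
so boundedness of the Hankel operator with norm at most $M$ yields $\int|p|^2\,d\mu\le M\|c\|_2^2$ for every polynomial $p$. Inserting the test polynomial $p_N(t):=1+t+\cdots+t^{N-1}$ (for which $\|c\|_2^2=N$) and noting that $p_N(t)\ge N(1-1/N)^{N-1}\ge N/e$ on the interval $(1-1/N,1)$, one obtains $\mu((1-1/N,1))\le Me^2/N$; the analogous choice $c_k=(-1)^k$ controls the tail near $t=-1$, and the choice $\veps\asymp 1/N$ yields (ii). The main obstacle is precisely this last step: both the quadratic-form/$L^2(\mu)$ identity and the construction of test polynomials that concentrate near $\pm 1$ while keeping $\|c\|_2^2$ small must be found together; the other two implications are essentially routine.
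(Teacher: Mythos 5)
Your argument is correct, but note that the paper does not prove this theorem at all: it imports it verbatim from Widom \cite[Theorem 3.1]{w} (see also Yafaev \cite[Theorem 1.3]{y}), so there is no in-paper proof to compare against. What you have written is a complete, self-contained, elementary proof of the cited result, and each step checks out: the layer-cake identity $\int_0^1 s^k\,d\nu = k\int_0^1 s^{k-1}G(s)\,ds$ together with the Beta integral gives (ii)$\Rightarrow$(iii) (you tacitly enlarge $C$ so that $G(s)\le C(1-s)$ holds on all of $[0,1)$ and not just near $1$, which is harmless since $\mu$ is finite); domination by the Hilbert matrix gives (iii)$\Rightarrow$(i) with the explicit bound $C_0\pi$; and for (i)$\Rightarrow$(ii) the identity $\langle Hc,c\rangle=\int|p|^2\,d\mu$ is valid because $t$ is real, the pointwise bound $p_N(t)\ge N(1-1/N)^{N-1}\ge N/e$ on $(1-1/N,1)$ is correct, and the sign-alternating choice handles the tail at $-1$. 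This chain of ideas (Hilbert's inequality for sufficiency, concentrated test polynomials for necessity) is essentially the classical route to this moment-type Hankel boundedness criterion; what your version buys over the paper's citation is explicit constants and independence from the references, at the cost of about a page. One presentational remark: it would be worth stating explicitly that the Hankel matrix is real symmetric here (since $\mu$ is a positive measure on $\R$), so that bounding the quadratic form $\langle Hc,c\rangle$ by $M\|c\|_2^2$ is exactly equivalent to the operator-norm bound you invoke in (i)$\Rightarrow$(ii).
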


We are now ready to prove our main result for the case where $A$ is a selfadjoint operator.

\begin{thm}\label{t:sa_charac}
Let $A$ be a bounded selfadjoint operator in $\calH$ and $x\in\calH$. Then the following statements are equivalent.
\begin{enumerate}
\item[{\rm (i)}]   $(A^kx)_{k\in\N}$ is a Bessel sequence in $\calH$.
\item[{\rm (ii)}]  $\mu_x(|t| > 1-\veps) = O(\veps)$ as $\veps\to 0$.
\item[{\rm (iii)}] $\<A^kx,x\> = O(k^{-1})$ as $k\to\infty$.
\end{enumerate}
\end{thm}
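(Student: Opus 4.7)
The guiding observation is that selfadjointness of $A$ turns the Gram matrix of $(A^kx)_{k\in\N}$ into a Hankel matrix, making Theorem \ref{t:hankel} tailor-made for the problem. Explicitly, for all $j,k \in \N$,
$$
\<A^kx, A^jx\> \,=\, \<A^{k+j}x, x\> \,=\, \int_\R t^{k+j}\,d\mu_x(t) \,=:\, q_{k+j},
$$
where we have used that $\mu_x$ is supported on $\R$. Hence the Gram matrix of $(A^kx)_{k\in\N}$ coincides with the Hankel matrix $(q_{k+j})_{j,k\in\N}$, and by Proposition \ref{p:charac_bessel}(iv) statement (i) is equivalent to this Hankel matrix defining a bounded operator on $\ell^2(\N)$.

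The plan is to combine this identification with Theorem \ref{t:hankel}, whose hypothesis $\mu_x(\{|t|\ge 1\})=0$ must be secured along the way. The implication (i) $\Rightarrow$ (ii) is precisely the short computation carried out in the paragraph preceding the statement of the theorem. Assuming (ii), continuity of measure from above as $\veps\downto 0$ yields $\mu_x(\{|t|\ge 1\})=0$, so Theorem \ref{t:hankel}, applied to $\mu_x$ restricted to $\R$, delivers (ii) $\Leftrightarrow$ (iii) $\Leftrightarrow$ boundedness of $(q_{k+j})$. By the previous paragraph this last statement is (i), so the loop closes as soon as we can also enter it from (iii) alone.

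To show that (iii) by itself forces $\mu_x(\{|t|\ge 1\})=0$, assume $q_k = O(k^{-1})$. For every $\delta > 0$ and every even $k = 2m$,
$$
q_{2m} \,=\, \int_\R t^{2m}\,d\mu_x(t) \,\ge\, (1+\delta)^{2m}\mu_x(\{|t|>1+\delta\}),
$$
and the incompatibility of exponential growth with $q_{2m} = O(m^{-1})$ forces $\mu_x(\{|t|>1+\delta\})=0$; letting $\delta\downto 0$ gives $\mu_x(\{|t|>1\})=0$. Similarly, $q_{2m}\ge \mu_x(\{1\})+\mu_x(\{-1\})$ for every $m\in\N$ shows that the atoms at $\pm 1$ must vanish. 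With $\mu_x(\{|t|\ge 1\})=0$ in hand, Theorem \ref{t:hankel} finishes (iii) $\Rightarrow$ (ii).

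The only non-routine step I anticipate is this last extraction of the support condition from the decay hypothesis in (iii): Theorem \ref{t:hankel} bakes $\mu(\{|t|\ge 1\})=0$ into its hypotheses, so one has to verify it separately when entering the cycle at (iii). Everything else is a transparent consequence of the Gram--Hankel identification and Proposition \ref{p:charac_bessel}(iv).
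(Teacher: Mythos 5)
Your proposal is correct and follows essentially the same route as the paper: identify the Gram matrix of $(A^kx)_{k\in\N}$ with the Hankel matrix $(q_{k+j})_{j,k}$, $q_k=\<A^kx,x\>$, verify separately that each of (i)--(iii) forces $\mu_x(\{|t|\ge 1\})=0$ (the paper does this for (iii) in one step via $\mu_x(\{|t|\ge1\})\le\int t^{2k}\,d\mu_x\le C/(2k)$, where you split off the atoms at $\pm1$, but both arguments are fine), and then invoke Theorem \ref{t:hankel} together with Proposition \ref{p:charac_bessel}(iv).
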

\begin{proof}
Let us first see that all three conditions (i)--(iii) imply that $\mu_x(|t|\ge 1) = 0$. This is clear for (ii), but also for (i), since (ii) follows from (i) by the discussion above. If (iii) is satisfied, then for all $k\in\N$ we have
$$
\mu_x(|t|\ge 1)\,\le\,\int_{\{|t|\ge 1\}}t^{2k}\,d\mu_x\,\le\,\int_\R t^{2k}\,d\mu_x = \left\<A^{2k}x,x\right\>\le\frac C{2k}
$$
with some $C > 0$. Hence, $\mu_x(|t|\ge 1) = 0$. Therefore, we can a priori assume this so that $\mu_x$ satisfies the conditions in Theorem \ref{t:hankel}. By Proposition \ref{p:charac_bessel}, a sequence $(y_k)_{k\in\N}$ is a Bessel sequence if and only if its Gram matrix
$$
G := \left(\left\<y_k,y_j\right\>\right)_{j,k\in\N}
$$
defines a bounded operator in $\ell^2(\N)$. In the present case, we have $G = (q_{k+j})_{j,k\in\N}$, where
$$
q_k := \<A^kx,x\> = \int_{(-1,1)} t^k\,d\mu_x,\quad k\in\N.
$$
In other words, $G$ is a Hankel matrix of the type considered in Theorem \ref{t:hankel}. The assertion now follows directly from there.
\end{proof}

\begin{rem}
We would like to mention that by \cite[Theorem 1.2]{y} the necessary condition that $\mu_x(|t|\ge 1) = 0$ is equivalent to the fact that the quadratic form $(c_k)\mapsto \sum_k\sum_j q_{k+j}c_k\ol{c_j}$, defined on the sequences with only finitely many non-zero components, is closable on $\ell^2(\N)$.
\end{rem}

\begin{ex}
Let $A$ be the multiplication operator in $L^2(-1,1)$ with the free variable (i.e., $(Af)(t) = tf(t)$). Then $(e_k)_{k\in\N} = (A^k\one)_{k\in\N}$ is a Bessel sequence in $L^2(-1,1)$. Indeed, here $E(\Delta)f = \chi_\Delta f$, so that $\mu_\one = \Leb_1$. Hence, the assertion follows from Theorem \ref{t:sa_charac}. Since for $c = (c_k)_{k\in\N}\in\ell^2(\N)$ we have (using Hilbert's inequality (see, e.g., \cite{hlp}))
$$
\left\|\sum_{k=0}^\infty c_ke_k\right\|_{L^2(-1,1)}^2 = \int_{-1}^1\sum_{k=0}^\infty\sum_{j=0}^\infty c_k\ol{c_j}t^{k+j}\,dt\,\le\,2\sum_{k=0}^\infty\sum_{j=0}^\infty\frac{|c_kc_j|}{k+j+1}\,\le\,2\pi\|c\|_2^2,
$$
a Bessel bound is given by $2\pi$. It is well known that $(e_k)_{k\in\N}$ is also complete in $L^2(-1,1)$. However, since $\sigma(A) = [-1,1]$, it follows from, e.g., \cite[Corollary 1]{ap} that $(e_k)_{k\in\N}$ is not a frame for $L^2(-1,1)$.
\end{ex}

\section{General Normal Operators}
We shall now consider general bounded normal operators $A$ in $\calH$. The next lemma transfers the original problem to an analogue problem in an $L^2$-space. Notice the equivalence of (i) and (ii) which is not obvious at first sight. Recall that $e_k(z) = z^k$ for $k\in\N$.

\begin{lem}\label{l:translate}
Let $A$ be a bounded normal operator in $\calH$, $x\in\calH$, and $C > 0$. Then the following statements are equivalent.
\begin{enumerate}
\item[{\rm (i)}]   The sequence $(A^kx)_{k\in\N}$ is a Bessel sequence in $\calH$ with Bessel bound $C$.
\item[{\rm (ii)}]  The sequence $((A^*)^kx)_{k\in\N}$ is a Bessel sequence in $\calH$ with Bessel bound $C$.
\item[{\rm (iii)}] The sequence $(e_k)_{k\in\N}$ is a Bessel sequence in $L^2(\mu_x)$ with Bessel bound $C$.
\end{enumerate}
\end{lem}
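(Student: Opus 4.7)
The plan is to apply Proposition \ref{p:charac_bessel}(iv) to each of the three statements and identify (or relate) the three Gram matrices by means of the spectral theorem for the normal operator $A$.

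For (i) and (iii) the key computation uses the functional calculus: since $A$ is normal with spectral measure $E$, one has $A^k = \int z^k\,dE(z)$ and $(A^*)^j = \int \bar z^j\,dE(z)$, and these commute, so
\[
\<A^kx,A^jx\> = \<(A^*)^jA^kx,x\> = \int \bar z^j z^k\,d\mu_x(z) = \<e_k,e_j\>_{L^2(\mu_x)}.
\]
Hence the Gram matrix of $(A^kx)_{k\in\N}$ in $\calH$ is literally the same infinite matrix as the Gram matrix of $(e_k)_{k\in\N}$ in $L^2(\mu_x)$. By Proposition \ref{p:charac_bessel}(iv), (i) is equivalent to this matrix defining a bounded operator on $\ell^2(\N)$ of norm at most $C$, and the same applies to (iii). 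So (i) $\Leftrightarrow$ (iii), with the Bessel bound preserved.

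For (ii), a parallel computation, again using normality to commute $A^j$ past $(A^*)^k$, gives
\[
\<(A^*)^kx,(A^*)^jx\> = \<A^j(A^*)^kx,x\> = \<(A^*)^kA^jx,x\> = \int z^j\bar z^k\,d\mu_x(z),
\]
which is the entrywise complex conjugate of the Gram matrix appearing in (i). I would then record the elementary fact that entrywise conjugation preserves the operator norm on $\ell^2(\N)$: if $J$ denotes the antilinear isometry $(c_n)\mapsto(\overline{c_n})$, then the operator with matrix $\overline M$ equals $JMJ$, hence has the same norm as the operator with matrix $M$. This yields (i) $\Leftrightarrow$ (ii) with the same Bessel bound $C$.

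The main thing to be careful about is normality, which is what lets one rewrite $\<A^kx,A^jx\>$ as a single spectral integral against $\mu_x$ and also lets the matrix for (ii) come out as the conjugate of the matrix for (i); without normality, neither reduction is available. Beyond that, the only mild technical point is keeping track of the constant $C$ through the three applications of Proposition \ref{p:charac_bessel}(iv), which is automatic because the three Gram operators have identical norm.
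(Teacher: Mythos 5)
Your proof is correct, but it takes a genuinely different route from the paper's. The paper proves the lemma via Lemma \ref{l:mu_yx}: writing $d\mu_{yx}=f_y\,d\mu_x$ and using that $L_x:y\mapsto f_y$ is an isometric isomorphism of $\calH_x$ onto $L^2(\mu_x)$, it converts $\<y,A^kx\>$ into $\<f_y,e_k\>_{L^2(\mu_x)}$ and verifies the Bessel inequality directly in each direction; there the implication (i)$\Rightarrow$(iii) needs the \emph{surjectivity} of $L_x$ (to realize an arbitrary $f\in L^2(\mu_x)$ as some $f_y$), and (i)$\Leftrightarrow$(ii) uses that $f\mapsto\ol f$ is an isometry of $L^2(\mu_x)$. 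You instead invoke the Gram-matrix criterion of Proposition \ref{p:charac_bessel}(iv) and observe, via the functional calculus identity $(A^*)^jA^k=\int\ol z^{\,j}z^k\,dE(z)$, that the Gram matrix of $(A^kx)_{k\in\N}$ coincides entrywise with that of $(e_k)_{k\in\N}$ in $L^2(\mu_x)$, while the Gram matrix of $((A^*)^kx)_{k\in\N}$ is its entrywise conjugate $JGJ$, hence has the same $\ell^2$-operator norm. Both arguments preserve the exact bound $C$ (since the optimal Bessel bound equals the norm of the Gram operator); yours is somewhat more economical in that it bypasses $L_x$ and its surjectivity entirely, whereas the paper's translation $\<y,A^kx\>=\<f_y,e_k\>$ is the form it reuses in later arguments. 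Two small points worth stating explicitly in your version: $e_k\in L^2(\mu_x)$ because $\supp(\mu_x)\subset\sigma(A)$ is compact, and since the Gram matrices are Hermitian, entrywise conjugation is the same as transposition, so the norm-preservation step is immediate on finitely supported sequences, which is all that Proposition \ref{p:charac_bessel}(iv) requires.
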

\begin{proof}
Let $y\in\calH$. Then, by Lemma \ref{l:mu_yx}, $d\mu_{yx} = f_y\,d\mu_x$ with some $f_y\in L^2(\mu_x)$. We have
\begin{equation}\label{e:translate}
\left\<y,A^kx\right\> = \int \ol z^k\,d\mu_{yx} = \int f_y(z)\ol{z^k}\,d\mu_x = \<f_y,e_k\>_{L^2(\mu_x)}.
\end{equation}
And, similarly,
\begin{equation}\label{e:translate*}
\left\<y,(A^*)^kx\right\> = \int z^k\,d\mu_{yx} = \int f_y(z)z^k\,d\mu_x = \<e_k,\ol{f_y}\>_{L^2(\mu_x)}.
\end{equation}
Recall that the operator $L_x : \calH_x\to L^2(\mu_x)$ mapping $y\in\calH_x$ to $f_y$ is an isometric isomorphism by Lemma \ref{l:mu_yx}.

(i)$\Sra$(iii). Let $f\in L^2(\mu_x)$. Then there exists $y\in\calH_x$ such that $f = f_y$. By \eqref{e:translate} we have
$$
\sum_{k=0}^\infty\left|\<f,e_k\>_{L^2(\mu_x)}\right|^2 = \sum_{k=0}^\infty\left|\left\<y,A^kx\right\>\right|^2\le C\|y\|^2 = C\|f\|_{L^2(\mu_x)}^2.
$$
(iii)$\Sra$(i). Let $y\in\calH$. Since for $u\in\calH_x^\perp$ we have $\mu_{ux} = 0$ and thus $f_u = 0$, it follows that $\|f_y\| = \|L_xP_xy\| = \|P_xy\|\le\|y\|$. Hence,
$$
\sum_{k=0}^\infty\left|\left\<y,A^kx\right\>\right|^2 = \sum_{k=0}^\infty\left|\left\<f_y,e_k\right\>_{L^2(\mu_x)}\right|^2\,\le\,C\|f_y\|_{L^2(\mu_x)}^2\,\le\,C\|y\|^2.
$$
(i)$\Slra$(ii). Assume that $(A^kx)_{k\in\N}$ is a Bessel sequence with bound $C$. Let $y\in\calH$. Then, by \eqref{e:translate*} and since $(e_k)_{k\in\N}$ is a Bessel sequence in $L^2(\mu_x)$, we have
$$
\sum_{k=0}^\infty\left|\left\<y,(A^*)^kx\right\>\right|^2 = \sum_{k=0}^\infty\left|\left\<\ol{f_y},e_k\right\>\right|^2\,\le\,C\|\ol{f_y}\|_{L^2(\mu_x)}^2\le C\|y\|^2.
$$
This proves (ii). The implication (ii)$\Sra$(i) follows by interchanging the roles of $A$ and $A^*$.
\end{proof}

\begin{rem}
Note that we cannot replace $(e_k)_{k\in\N}$ in Lemma \ref{l:translate} by $(e_k)_{k\in\Z}$ (see \eqref{e:e_k}) as in the unitary case. This is especially revealed in the selfadjoint case where $e_k = e_{-k}$.
\end{rem}

Let $\mu$ be a finite Borel measure on $\C$. By $\scrL^2(\mu)$ we denote the space of all functions (not equivalence classes!) $f$ such that $\|f\|_{L^2(\mu)}^2 = \int |f|^2\,d\mu < \infty$. By saying that a normed function space $(M,\|\cdot\|_M)$ is continuously embedded in $L^2(\mu)$ we mean that $M$ is a subspace of $\scrL^2(\mu)$ and that there exists $C > 0$ such that $\|\psi\|_{L^2(\mu)}^2\le C\|\psi\|_M^2$ for all $\psi\in M$. We write $M\hookrightarrow L^2(\mu)$. The constant $C$ will be called the {\it continuity bound} of the continuous embedding.

\begin{thm}\label{t:hardy1}
Let $A$ be a bounded normal operator in $\calH$, $x\in\calH$, and $C > 0$. Then the sequence $(A^kx)_{k\in\N}$ is a Bessel sequence in $\calH$ with Bessel bound $C$ if and only if the following conditions hold.
\begin{enumerate}
\item[{\rm (i)}]  $x\in E(\ol\D)\calH$ {\rm (}i.e., $\supp(\mu_x)\subset\ol\D${\rm )}.
\item[{\rm (ii)}] $H^2(\ol\D)\hookrightarrow L^2(\mu_x)$ with continuity bound $C$.
\end{enumerate}
\end{thm}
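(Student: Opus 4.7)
The plan is to combine Lemma \ref{l:necessary} and Lemma \ref{l:translate} to reduce the theorem to an equivalence between the Bessel property of the monomials $(e_k)_{k\in\N}$ in $L^2(\mu_x)$ (with bound $C$) and the continuous embedding $H^2(\ol\D)\hookrightarrow L^2(\mu_x)$ (with continuity bound $C$), under the standing assumption $\supp(\mu_x)\subset\ol\D$ coming from (i).

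The backward direction is the soft one. Given (i) and (ii), for any $(c_k)_{k\in\N}\in\ell^2(\N)$ the partial sums $S_n := \sum_{k=0}^n c_ke_k$ are polynomials, hence elements of $H^2(\ol\D)$, and $\|S_n - f\|_{H^2(\ol\D)}^2 = \sum_{k>n}|c_k|^2\to 0$ where $f := \sum_{k=0}^\infty c_ke_k\in H^2(\ol\D)$. By (ii), $S_n\to f$ in $L^2(\mu_x)$ with $\|f\|_{L^2(\mu_x)}^2\le C\|c\|_2^2$. Proposition \ref{p:charac_bessel} (ii)$\Rightarrow$(i) then yields that $(e_k)_{k\in\N}$ is Bessel in $L^2(\mu_x)$ with bound $C$, and Lemma \ref{l:translate} transports this back to $(A^kx)_{k\in\N}$.

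For the forward direction, assume $(A^kx)_{k\in\N}$ is Bessel with bound $C$. Lemma \ref{l:necessary} gives (i), and Lemma \ref{l:translate} shows $(e_k)_{k\in\N}$ is Bessel in $L^2(\mu_x)$ with bound $C$. Given $f\in H^2(\ol\D)$ with coefficients $(c_k)_{k\in\N}$, Proposition \ref{p:charac_bessel} produces the $L^2(\mu_x)$-limit $g := \sum_{k=0}^\infty c_ke_k$ with $\|g\|_{L^2(\mu_x)}^2\le C\|c\|_2^2$. The core task is to identify $g$ with $f$ $\mu_x$-a.e.; once this is done, (ii) follows. Passing to a subsequence $(S_{n_j})$ converging to $g$ pointwise $\mu_x$-a.e., the pointwise convergence of the power series $\sum c_kz^k$ on $\D$ immediately gives $g = f$ $\mu_x|\D$-a.e.

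The delicate part, and the main obstacle, is the boundary piece $\mu_x|\T$. Carleson's theorem (already invoked in the definition of $H^2(\ol\D)$) yields $S_n(z)\to f(z)$ for $\arc$-a.e.\ $z\in\T$, and this conclusion transfers to $\mu_x|\T$-a.e.\ convergence only provided $\mu_x|\T\ll\arc$. To secure this, observe that extending functions in $L^2(\mu_x|\T)$ by zero to $\ol\D$ preserves norms and pairings with each $e_k$, so $(e_k)_{k\in\N}$ is Bessel in $L^2(\mu_x|\T)$ with bound $C$. Now multiplication by $z$ is a unitary operator $U$ on $L^2(\mu_x|\T)$, its spectral measure is $\Delta\mapsto M_{\chi_\Delta}$, and the vector $\one$ satisfies $\mu_\one = \mu_x|\T$ together with $U^k\one = e_k$. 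Theorem \ref{t:unitary}, in the implication (i)$\Rightarrow$(iii) applied to this unitary model, forces $\mu_x|\T$ to be Lipschitz continuous with respect to $\arc$, hence absolutely continuous. With this in hand, the $\arc$-null exceptional set from Carleson's theorem is also $\mu_x|\T$-null, so $S_{n_j}$ converges both to $f$ and to $g$ on a set of full $\mu_x|\T$-measure, yielding $g = f$ $\mu_x|\T$-a.e. Combining with the $\D$-part, $f = g$ in $L^2(\mu_x)$ and therefore $\|f\|_{L^2(\mu_x)}^2\le C\|f\|_{H^2(\ol\D)}^2$, which is (ii).
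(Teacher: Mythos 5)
Your proof is correct and follows essentially the same route as the paper: reduce via Lemma \ref{l:necessary} and Lemma \ref{l:translate} to the Bessel property of $(e_k)_{k\in\N}$ in $L^2(\mu_x)$, identify the $L^2(\mu_x)$-limit of the partial sums with $f$ by pointwise convergence on $\D$ and by Carleson's theorem plus $\mu_x|\T\ll\arc$ on $\T$, the latter secured through Theorem \ref{t:unitary}. The only (cosmetic) difference is that you obtain the Lipschitz continuity of $\mu_x|\T$ by applying Theorem \ref{t:unitary} to the multiplication-by-$z$ model on $L^2(\mu_x|\T)$ with cyclic vector $\one$, whereas the paper applies it directly to the unitary restriction $A_\T = A|E(\T)\calH$ with vector $E(\T)x$ --- these are unitarily equivalent, so the argument is the same.
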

\begin{proof}
Assume that $(A^kx)_{k\in\N}$ is a Bessel sequence with Bessel bound $C$. Then (i) holds due to Lemma \ref{l:necessary}. We also mention the following: since $A_\T := A|E(\T)\calH$ is a unitary operator in $E(\T)\calH$ and $(A_\T^kE(\T)x)_{k\in\N}$ is easily seen to be a Bessel sequence in $E(\T)\calH$, it is a consequence of Theorem \ref{t:unitary} that $\mu_x|\T$ is Lipschitz continuous with respect to $\arc$. In particular, $\mu_x|\T\ll\arc$. By Lemma \ref{l:translate}, $(e_k)_{k\in\N}$ is a Bessel sequence in $L^2(\mu_x)$ with Bessel bound $C$. Hence, for each $c = (c_k)_{k\in\N}\in\ell^2(\N)$, the series $\sum_{k=0}^\infty c_ke_k$ converges in $L^2(\mu_x)$ and $\|\sum_{k=0}^\infty c_ke_k\|_{L^2(\mu_x)}^2\le C\|c\|_2^2$. Now, let $f\in H^2(\ol\D)$. Then there exists $c = (c_k)_{k\in\N}\in\ell^2(\N)$ such that $f(z) = \sum_{k=0}^\infty c_kz^k$ for $z\in\D$ and for $\arc$-a.e.\ $z\in\T$. Put $f_n := \sum_{k=0}^nc_ke_k$. Then $f_n(z)\to f(z)$ for each $z\in\D$ and $f_n(z)\to f(z)$ as $n\to\infty$ for $\arc$-a.e. $z\in\T$. Since $\mu_x|\T\ll\arc$, we conclude that $f_n(z)\to f(z)$ as $n\to\infty$ for $\mu_x$-a.e. $z\in\ol\D$. Hence, (any representative of) the $L^2(\mu_x)$-limit $\sum_{k=0}^\infty c_ke_k$ of $(f_n)$ coincides with $f$ $\mu_x$-a.e. on $\ol\D$, that is, $f\in L^2(\mu_x)$ and
$$
\|f\|_{L^2(\mu_x)}^2 = \left\|\sum_{k=0}^\infty c_ke_k\right\|_{L^2(\mu_x)}^2\le\, C\|c\|_2^2 = C\|f\|_{H^2(\ol\D)}^2.
$$
This shows that $H^2(\ol\D)$ is continuously embedded in $L^2(\mu_x)$ with continuity bound $C$.

Conversely, assume that the conditions (i) and (ii) are satisfied. Let $c = (c_k)_{k\in\N}\in\ell^2(\N)$. Then
\begin{align*}
\left\|\sum_{k=n}^{n+m}c_ke_k\right\|_{L^2(\mu_x)}^2
&\le C\left\|\sum_{k=n}^{n+m}c_ke_k\right\|_{H^2(\ol\D)}^2
= C\sum_{k=n}^{n+m}|c_k|^2.
\end{align*}
This shows that the series $\sum_{k=0}^\infty c_ke_k$ converges in $L^2(\mu_x)$ and that $\|\sum_{k=0}^\infty c_ke_k\|_{L^2(\mu_x)}^2\le C\|c\|_2^2$. Hence, $(e_k)_{k\in\N}$ is a Bessel sequence in $L^2(\mu_x)$ with Bessel bound $C$. The claim now follows from Lemma \ref{l:translate}.
\end{proof}

Recall that we were aiming to find characterizations of the Bessel property of $(A^kx)_{k\in\N}$ solely in terms of the measure $\mu_x$. Although items (i) and (ii) in Theorem \ref{t:hardy1} are such conditions, condition (ii) is somewhat implicit. In order to state an equivalent, but more explicit condition, we shall utilize the following theorem which is known under the term {\it Carleson embedding theorem} (see, e.g., \cite{ga} or \cite{ni}). By $B_r(z)$ we denote the circle of radius $r > 0$ and center $z\in\C$.

\begin{thm}\label{t:carleson_embedding}
Let $\mu$ be a finite Borel measure on the unit disc $\D$. Then the following statements are equivalent.
\begin{enumerate}
\item[{\rm (i)}]   $H^2(\D)$ is continuously embedded in $L^2(\mu)$.
\item[{\rm (ii)}]  There exists $C_1 > 0$ such that for all $r > 0$ and $z\in\T$ we have
$$
\mu(\D\cap B_r(z))\le C_1r.
$$
\item[{\rm (iii)}] We have
$$
\sup_{z\in\D}\int_\D\frac{1-|z|^2}{|1-\ol zw|^2}\,d\mu(w) < \infty.
$$
\end{enumerate}
\end{thm}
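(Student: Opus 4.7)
The plan is to prove the implications cyclically as (i)$\Sra$(ii)$\Sra$(iii)$\Sra$(i). The hard content is the classical Carleson embedding theorem (ii)$\Sra$(i); the other two steps are essentially computational checks that can be carried out by direct estimation, and the cycle will be closed efficiently by also noting the easy (iii)$\Sra$(ii).

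For (i)$\Sra$(ii), I would test the embedding against the normalized reproducing kernels of $H^2(\D)$. Given $z_0\in\T$ and $r\in(0,1)$, set $\la:=(1-r)z_0\in\D$ and consider
$$
f_\la(w)\,:=\,\frac{(1-|\la|^2)^{1/2}}{1-\ol\la w},\qquad w\in\D,
$$
which has unit $H^2(\D)$-norm. Since $1-\ol\la z_0=r$ and $|\ol\la(z_0-w)|\le(1-r)r$ for $w\in\D\cap B_r(z_0)$, the triangle inequality gives $|1-\ol\la w|\le 2r$; together with $1-|\la|^2=2r-r^2\ge r$, this yields $|f_\la(w)|^2\ge (4r)^{-1}$ on $\D\cap B_r(z_0)$. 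Then (i) forces $\mu(\D\cap B_r(z_0))\le 4Cr$, and the case $r\ge 1$ is absorbed into $\mu(\D)<\infty$.

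For (ii)$\Sra$(iii), I would use a dyadic annular decomposition. Given $z\in\D\setminus\{0\}$ put $\rho:=1-|z|$, $z_0:=z/|z|$, and split $\D=\bigsqcup_{k\ge 0}S_k$ with $S_0:=\D\cap B_\rho(z_0)$ and $S_k:=\D\cap(B_{2^k\rho}(z_0)\setminus B_{2^{k-1}\rho}(z_0))$ for $k\ge 1$. Using $|w-z_0|\ge 2^{k-1}\rho$ and $\rho=1-|z|$, standard estimates give $|1-\ol z w|\ge c\cdot 2^k\rho$ on $S_k$ for an absolute constant $c>0$, so the kernel $K(z,w):=(1-|z|^2)/|1-\ol z w|^2$ satisfies $K(z,w)\le c'(2^{2k}\rho)^{-1}$ there. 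Combining with $\mu(S_k)\le C_1\cdot 2^k\rho$ from (ii), a geometric sum $\sum_k 2^{-k}$ produces $\int_\D K(z,w)\,d\mu(w)\le c''C_1$, uniformly in $z$, which is (iii). The case $z=0$ is immediate since $K(0,w)=1$ and $\mu$ is finite.

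The remaining implication (iii)$\Sra$(i) is the deepest one. The expedient route is to first note that (iii)$\Sra$(ii) is easy --- testing at $z=(1-r)z_0$ yields $K(z,w)\ge (4r)^{-1}$ on $\D\cap B_r(z_0)$, hence $\mu(\D\cap B_r(z_0))\le 4Mr$ --- and then to deduce (i) from (ii) via Carleson's original maximal-function argument. For $f\in H^2(\D)$ let $f^\#$ denote the non-tangential maximal function on $\T$; by Hardy--Littlewood, $\|f^\#\|_{L^2(\T)}\le c\|f\|_{H^2(\D)}$. The crucial geometric fact is that every $w\in\D$ with $|f(w)|>\la$ lies in a fixed non-tangential cone based at some $\zeta\in E_\la:=\{f^\#>\la\}$, so $\{w\in\D:|f(w)|>\la\}$ is contained in a ``tent'' $T(E_\la)$ over $E_\la$; the Carleson condition (ii) then yields $\mu(T(E_\la))\le C_1'|E_\la|$ via a Vitali-type covering of $E_\la$ by arcs. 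The layer-cake formula
$$
\int_\D|f|^2\,d\mu\,=\,2\int_0^\infty\la\,\mu(\{|f|>\la\})\,d\la
$$
combined with $\mu(\{|f|>\la\})\le C_1'|E_\la|$ produces $\int_\D|f|^2\,d\mu\le C_1'\|f^\#\|_{L^2(\T)}^2\le c^2C_1'\|f\|_{H^2(\D)}^2$, which is (i). The main obstacle is setting up the tent/cone geometry cleanly and executing the covering step $\mu(T(E_\la))\le C_1'|E_\la|$; this is precisely where the boundary-anchored form of (ii) --- rather than any interior condition that weakens near $\T$ --- becomes decisive.
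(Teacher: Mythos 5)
You should first note that the paper does not prove this statement at all: it is quoted as the classical Carleson embedding theorem with references to Garnett and Nikolskii, so there is no in-paper argument to compare against. Your proposal is essentially the standard textbook proof of that classical result, and its outline is correct. The reproducing-kernel test for (i)$\Rightarrow$(ii) and (iii)$\Rightarrow$(ii) is carried out accurately (the estimates $1-\ol\la z_0=r$, $|1-\ol\la w|\le 2r$, $1-|\la|^2\ge r$ all check out), and the dyadic-annulus computation for (ii)$\Rightarrow$(iii) is the standard one; the only small gap there is that the bound $|1-\ol z w|\ge c\,2^k\rho$ on $S_k$ requires $|z|$ bounded away from $0$ (say $|z|\ge 1/2$), the complementary case being trivial since then $K(z,w)\le 4$ and $\mu$ is finite --- you should state this case split explicitly. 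The implication (ii)$\Rightarrow$(i) is only sketched: the key assertions (that $\{|f|>\la\}$ lies in the tent over $E_\la=\{f^\#>\la\}$ because the shadow arc of any such point is contained in $E_\la$, that $\mu(T(E_\la))\le C_1'|E_\la|$ by decomposing $E_\la$ into disjoint arcs and covering each tent by a ball $B_{c|I_j|}(c_j)$, and the Hardy--Littlewood bound $\|f^\#\|_{L^2(\T)}\le c\|f\|_{H^2(\D)}$) are all true and assemble correctly via the layer-cake formula, but each would need to be proved or cited to make the argument self-contained; as it stands you are reproducing, in compressed form, exactly the proof in the sources the paper cites. In the context of this paper the efficient course is simply to cite the theorem, as the author does.
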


A measure $\mu$ satisfying one of the conditions in Theorem \ref{t:carleson_embedding} is called a {\it Carleson measure}. The following theorem characterizes the Bessel property of $(A^kx)_{k\in\N}$ in terms of both the measure $\mu_x$ and the expression $\|(A_{\ol\D}-\la)^{-1}x\|$, where $|\la| > 1$ and $A_{\ol\D}$ is the restriction of the normal operator $A$ to its spectral subspace $E(\ol\D)\calH$.

\begin{thm}\label{t:hardy}
Let $A$ be a bounded normal operator in $\calH$ and $x\in E(\ol\D)\calH$. Then the following statements are equivalent.
\begin{enumerate}
\item[{\rm (i)}]   The sequence $(A^kx)_{k\in\N}$ is a Bessel sequence in $\calH$.
\item[{\rm (ii)}]  There exists $C_1 > 0$ such that for all $r>0$ and $z\in\T$ we have
$$
\mu_x(\ol\D\cap B_r(z))\le C_1r.
$$
\item[{\rm (iii)}] $\|(A_{\ol\D} - \la)^{-1}x\|^2 = O((|\la|^2-1)^{-1})$ as $|\la|\downto 1$.
\end{enumerate}
\end{thm}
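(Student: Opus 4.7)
The plan is to establish (i)$\Leftrightarrow$(ii) via Theorem \ref{t:hardy1} combined with Theorem \ref{t:carleson_embedding} (Carleson embedding) and Theorem \ref{t:unitary} (unitary case), and then (ii)$\Leftrightarrow$(iii) by a direct resolvent computation using the spectral theorem. Throughout I decompose $\mu_x = \mu_x|\D + \mu_x|\T$.

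For (i)$\Rightarrow$(ii), Theorem \ref{t:hardy1} gives $H^2(\ol\D)\hookrightarrow L^2(\mu_x)$. Restricting to $\D$ yields $H^2(\D)\hookrightarrow L^2(\mu_x|\D)$, which by Theorem \ref{t:carleson_embedding}(ii) forces $\mu_x|\D(\D\cap B_r(z))\le C_1 r$ for $z\in\T$. For the boundary part, $A_\T := A|E(\T)\calH$ is unitary and $(A_\T^k E(\T)x)_{k\in\N}$ inherits the Bessel property by projection, so Theorem \ref{t:unitary} gives that $\mu_x|\T$ is Lipschitz continuous with respect to $\arc$; combined with the elementary $\arc(\T\cap B_r(z))\le cr$, this yields $\mu_x|\T(\T\cap B_r(z))\le C_2 r$, and summing both estimates proves (ii). For (ii)$\Rightarrow$(i), condition (ii) immediately implies that $\mu_x|\D$ is Carleson on $\D$ and, by covering any short arc in $\T$ by a ball $B_\ell(z_0)$ of comparable radius and invoking regularity of $\mu_x|\T$, that $\mu_x|\T$ is Lipschitz with respect to $\arc$. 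Theorem \ref{t:carleson_embedding}((ii)$\Rightarrow$(i)) then gives $H^2(\D)\hookrightarrow L^2(\mu_x|\D)$, and Lemma \ref{l:liphoelder} places the Radon--Nikodym derivative $d\mu_x|\T/d\arc$ in $L^\infty(\T)$, hence $L^2(\T)\hookrightarrow L^2(\mu_x|\T)$. Summing these two embeddings over $f\in H^2(\ol\D)$ yields $H^2(\ol\D)\hookrightarrow L^2(\mu_x)$, and (i) follows from Theorem \ref{t:hardy1}.

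For (ii)$\Leftrightarrow$(iii), the spectral theorem for $A_{\ol\D}$ gives $\|(A_{\ol\D}-\la)^{-1}x\|^2 = \int_{\ol\D}|z-\la|^{-2}\,d\mu_x(z)$ for $|\la|>1$. The substitution $\la = 1/\ol w$ with $w\in\D$ uses $|\la|^2-1 = (1-|w|^2)/|w|^2$ and $|z-\la|^2 = |1-\ol w z|^2/|w|^2$ to produce the identity
\[
(|\la|^2-1)\,\|(A_{\ol\D}-\la)^{-1}x\|^2 = \int_{\ol\D}\frac{1-|w|^2}{|1-\ol w z|^2}\,d\mu_x(z).
\]
Since $\mu_x$ is finite and the integrand is uniformly bounded on any compact subset of $\D$, (iii) is equivalent to this integral being bounded uniformly in $w\in\D$. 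For (iii)$\Rightarrow$(ii), given $z_0\in\T$ and $r\in(0,1)$, test at $w := (1-r)z_0$: a direct computation shows $1-|w|^2\ge r$ and $|1-\ol w z|\le 2r$ for every $z\in\ol\D\cap B_r(z_0)$, so the integral dominates $\mu_x(\ol\D\cap B_r(z_0))/(4r)$, giving the Carleson bound. For (ii)$\Rightarrow$(iii), split the integral at $\T$: Theorem \ref{t:carleson_embedding}(iii) applied to the Carleson measure $\mu_x|\D$ controls the $\D$-part uniformly in $w$, while for the $\T$-part, writing $d\mu_x|\T = g\,d\arc$ with $g\in L^\infty(\T)$ (obtained from (ii) as in the previous step), one has $\int_\T P(w,z)g(z)\,d\arc(z) \le \|g\|_\infty\int_\T P(w,z)\,d\arc(z) = \|g\|_\infty$.

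The main technical hurdle I anticipate is the clean extraction of Lipschitz continuity of $\mu_x|\T$ from the Bessel hypothesis, which I sidestep by routing through Theorem \ref{t:unitary} applied to the unitary restriction $A_\T$ rather than wrestling with outer-function arguments inside $H^2$. The decisive computational step is the change of variables $\la = 1/\ol w$, which turns the rescaled resolvent norm into exactly the Carleson test integral appearing in Theorem \ref{t:carleson_embedding}(iii) and thereby unifies the three conditions.
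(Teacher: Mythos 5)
Your argument is correct, and for the equivalence (i)$\Leftrightarrow$(ii) it is essentially the paper's proof: both routes combine Theorem \ref{t:hardy1} with the Carleson embedding theorem for $\mu_x|\D$ and Theorem \ref{t:unitary} applied to the unitary restriction $A_\T$ for $\mu_x|\T$ (the paper reassembles the two pieces as Bessel sequences $(e_k|\D)$, $(e_k|\T)$ via Lemma \ref{l:translate}, you reassemble them as two continuous embeddings; these are interchangeable). Where you genuinely diverge is the third condition. The paper proves (i)$\Leftrightarrow$(iii) by decomposing $A_{\ol\D}=A_\T\oplus A_\D$, disposing of the circle part by quoting the resolvent criterion (v) of Theorem \ref{t:unitary}, and performing the substitution $\la=\ol w^{-1}$ only on the disc part so as to land exactly on condition (iii) of Theorem \ref{t:carleson_embedding}; it then closes the loop through the already-established (i)$\Leftrightarrow$(ii). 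You instead prove (ii)$\Leftrightarrow$(iii) directly: the same substitution applied to the whole measure on $\ol\D$, then a hands-on test-point estimate at $w=(1-r)z_0$ for (iii)$\Rightarrow$(ii) (which in effect reproves the easy half of the Carleson characterization, but uniformly for the measure including its boundary mass), and for (ii)$\Rightarrow$(iii) a split into the Carleson bound on $\D$ plus the Poisson-kernel bound $\int_\T P(w,z)g\,d|z|\le\|g\|_\infty$ on $\T$ (which reproves (iii)$\Rightarrow$(v) of Theorem \ref{t:unitary} inline). Your version is slightly more self-contained and makes the geometric content of the resolvent condition explicit; the paper's version is shorter because it recycles Theorem \ref{t:unitary}(v) wholesale. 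Both are complete; the only points worth making explicit in a write-up are the trivial case $r\ge1$ in the test-point step and the passage from balls to arbitrary Borel subsets of $\T$ via regularity, which you correctly flag.
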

\begin{proof}
(i)$\Slra$(ii). Assume that $(A^kx)_{k\in\N}$ is a Bessel sequence with Bessel bound $C$. As in the proof of Theorem \ref{t:hardy1} we infer that $\mu_x(\Delta)\le C|\Delta|$ for $\Delta\in\frakB(\T)$. As a consequence of Theorem \ref{t:hardy1}, $H^2(\D)$ is continuously embedded in $L^2(\mu_x)$. Thus, $\mu_x|\D$ is a Carleson measure which implies that there exists $C' > 0$ such that for all $r>0$ and $z\in\T$ we have\footnote{For the last estimate note that $\arc$ is normalized, i.e., $|\T| = 1$.}
$$
\mu_x(\ol\D\cap B_r(z))\,\le\,C'r + \mu_x(\T\cap B_r(z))\,\le\,C'r + C|\T\cap B_r(z)|\,\le\,(C' + C/2)r.
$$
Assume now that condition (ii) is satisfied. Then, in particular, $\mu_x|\T$ is Lipschitz continuous with respect to arc length measure. Therefore, due to Theorem \ref{t:unitary} and Lemma \ref{l:translate}, $(e_k|\T)_{k\in\N}$ is a Bessel sequence in $L^2(\mu_x)$. Moreover, Theorem \ref{t:carleson_embedding} implies that $H^2(\D)$ is continuously embedded in $L^2(\mu_x|\D)$. Equivalently, $(e_k|\D)_{k\in\N}$ is a Bessel sequence in $L^2(\mu_x)$. Hence, $(e_k)_{k\in\N}$ is a Bessel sequence in $L^2(\mu_x)$, and the claim follows from Lemma \ref{l:translate}.

(i)$\Slra$(iii). Put $A_\T := A|E(\T)\calH$, $A_\D := A|E(\D)\calH$, $x_\T := E(\T)x$, and $x_\D := E(\D)x$. Then $A_{\ol\D} = A_\T\oplus A_\D$ and (iii) holds if and only if the following two conditions are satisfied:
\begin{enumerate}
\item[(a)] $\|(A_{\T} - \la)^{-1}x_\T\|^2 = O((|\la|^2-1)^{-1})$ as $|\la|\downto 1$.
\item[(b)] $\|(A_{\D} - \la)^{-1}x_\D\|^2 = O((|\la|^2-1)^{-1})$ as $|\la|\downto 1$.
\end{enumerate}
From Theorem \ref{t:unitary} we know that (a) is equivalent to $(A_\T^kx_\T)_{k\in\N}$ being a Bessel sequence in $E(\T)\calH$. Hence, the equivalence (i)$\Slra$(iii) is proved if we can show that (b) holds if and only if $(A_\D^kx_\D)_{k\in\N}$ is a Bessel sequence in $E(\D)\calH$. For this, let $\la\in\C$, $|\la| > 1$, and put $z = \ol\la^{-1}$. Then
\begin{align*}
(|\la|^2 - 1)\|(A_\D - \la)^{-1}x_\D\|^2
&= \frac{1-|z|^2}{|z|^2}\|(\ol z^{-1} - A_\D)^{-1}x_\D\|^2\\
&= (1-|z|^2)\|\ol z^{-1}(\ol z^{-1} - A_\D)^{-1}x_\D\|^2\\
&= (1-|z|^2)\|(1 - \ol zA_\D)^{-1}x_\D\|^2\\
&= \int_\D \frac{1-|z|^2}{|1-\ol zw|^{2}}\,d\mu_x.
\end{align*}
Hence, (b) is satisfied if and only if (iii) in Theorem \ref{t:carleson_embedding} holds for the measure $\mu = \mu_x$. This is true if and only if $\mu_x|\D$ is a Carleson measure. Thus, the already proved equivalence (i)$\Slra$(ii) yields the claim.
\end{proof}

\begin{rem}
(a) The condition (ii) in Theorem \ref{t:hardy} can be equivalently split up into the following two conditions:
\begin{enumerate}
\item[(iia)] $\mu_x|\T$ is Lipschitz continuous with respect to arc length measure.
\item[(iib)] There exists $C_1 > 0$ such that for all $r>0$ and $z\in\T$ we have
$$
\mu_x(\D\cap B_r(z))\le C_1r.
$$
\end{enumerate}
(b) The equivalence (i)$\Slra$(ii) in Theorem \ref{t:sa_charac} follows from Theorem \ref{t:hardy}. However, we preferred to treat the selfadjoint case separately in order to demonstrate and emphasize the close connection to Hankel matrices. Although also (i)$\Slra$(iii)$\Slra$(v) in Theorem \ref{t:unitary} follows from Theorem \ref{t:hardy}, we advert to the fact that we have used these equivalences in the proofs of Theorems \ref{t:hardy1} and \ref{t:hardy}.

\smallskip\noindent
(c) Note that -- in contrast to Theorem \ref{t:hardy1} -- Theorem \ref{t:hardy} does not take account of the Bessel bound. On the other hand, as already mentioned, the conditions on the measure $\mu_x$ in Theorem \ref{t:hardy} are more explicit than those in Theorem \ref{t:hardy1}.
\end{rem}

We close this section with the following corollary providing a necessary condition on a system of the form $(A^kx_i)_{i\in I,\,k\in\N}$ to be a complete Bessel sequence. A similar statement can be found in \cite{accmp}.

\begin{cor}\label{c:morevecs}
Let $A$ be a bounded normal operator in $\calH$ and $(x_i)_{i\in I}\subset\calH$ be such that the system $(A^kx_i)_{i\in I,\,k\in\N}$ is a complete Bessel sequence in $\calH$ with Bessel bound $C > 0$. Then $\sigma(A)\subset{\ol\D}$ and $E|\T$ is absolutely continuous with respect to the arc length measure.
\end{cor}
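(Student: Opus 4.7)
The plan is to reduce to the single-vector case and then exploit completeness. The key observation is that for each fixed $i_0\in I$, the sequence $(A^kx_{i_0})_{k\in\N}$ is itself a Bessel sequence in $\calH$ with bound $C$, since restricting $\sum_{i\in I,k\in\N}|\<y,A^kx_i\>|^2\le C\|y\|^2$ to the single index $i_0$ only decreases the left-hand side. Thus every individual orbit $(A^kx_i)_{k\in\N}$ satisfies the hypotheses of Lemma \ref{l:necessary} and of Theorem \ref{t:hardy1}, with the \emph{same} constant $C$ for all $i\in I$.

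For the spectral containment $\sigma(A)\subset\ol\D$, I would first invoke Lemma \ref{l:necessary} to deduce that $x_i\in E(\ol\D)\calH$ for every $i\in I$. Since $E(\ol\D)$ commutes with $A$, the closed subspace $E(\ol\D)\calH$ is $A$-invariant and contains every $A^kx_i$. Completeness of $(A^kx_i)_{i\in I,\,k\in\N}$ then forces $E(\ol\D)\calH=\calH$, equivalently $E(\C\setminus\ol\D)=0$, giving $\sigma(A)\subset\ol\D$.

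For the absolute continuity of $E|\T$, let $\Delta\in\frakB(\T)$ with $\arc(\Delta)=0$; the aim is to show $E(\Delta)=0$. Applying Theorem \ref{t:hardy1} to each $x_i$ (whose proof already establishes, via the auxiliary unitary $A_\T=A|E(\T)\calH$ and Theorem \ref{t:unitary}, that $\mu_{x_i}|\T$ is $\arc$-Lipschitz with constant $C$), we obtain
$$
\|E(\Delta)x_i\|^2=\mu_{x_i}(\Delta)\le C\arc(\Delta)=0.
$$
Hence $E(\Delta)x_i=0$ for every $i\in I$, and since $E(\Delta)$ commutes with $A$, $E(\Delta)A^kx_i=A^kE(\Delta)x_i=0$ for all $i\in I$ and $k\in\N$. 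Completeness of the system together with boundedness of the projection $E(\Delta)$ then gives $E(\Delta)y=0$ for every $y\in\calH$, i.e., $E(\Delta)=0$.

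I do not anticipate a serious obstacle: the corollary is essentially bookkeeping, combining the per-vector Bessel analysis already developed in the paper with the completeness hypothesis. The only point requiring care is that the Lipschitz bound for $\mu_{x_i}|\T$ is \emph{uniform} in $i$, which is automatic since the same Bessel bound $C$ is inherited by each individual orbit.
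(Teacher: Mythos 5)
Your proposal is correct and follows essentially the same route as the paper: restrict the Bessel inequality to each single orbit, apply Lemma \ref{l:necessary} plus completeness to get $\sigma(A)\subset\ol\D$, and use the $\arc$-Lipschitz continuity of $\mu_{x_i}|\T$ (from Theorem \ref{t:unitary}) together with completeness and the commutation of $E(\Delta)$ with $A$ to kill $E(\Delta)$ on arc-null sets. The remark about uniformity of the constant $C$ in $i$ is harmless but not needed, since only the qualitative vanishing $\mu_{x_i}(\Delta)=0$ enters the argument.
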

\begin{proof}
Clearly, for each $i\in I$ the system $(A^kx_i)_{k\in\N}$ is a Bessel sequence. Therefore, $x_i\in E({\ol\D})\calH$ for each $i\in I$ by Lemma \ref{l:necessary}. But then $\ol\linspan\{A^kx_i : i\in I,\,k\in\N\}\subset E({\ol\D})\calH$. We conclude that $E({\ol\D})\calH = \calH$ and thus $\sigma(A)\subset{\ol\D}$. Let $\Delta\in\frakB(\T)$ with arc length measure zero. Then, by Theorem \ref{t:unitary}, $E(\Delta)x_i = 0$ for all $i\in I$. Hence $E(\Delta)\calH = E(\Delta)\ol\linspan\{A^kx_i : i\in I,\,k\in\N\} = \ol\linspan\{A^k E(\Delta)x_i : i\in I,\,k\in\N\} = \{0\}$, that is, $E(\Delta) = 0$.
\end{proof}

\begin{rem}
If $(A^kx)_{k\in\N}$ is complete but fails to be a Bessel sequence, it may well be that a subsequence $(A^{k_j}x)_{j\in\N}$ is a Bessel sequence. If, in addition, we have that $k_1 = 0$ and $\sum_j k_j^{-1} = \infty$, then $(A^{k_j}x)_{j\in\N}$ remains complete. This was proved in \cite{acmt} as a consequence of the M\"untz-Sz\'asz Theorem.
\end{rem}

\section{Sufficient Conditions}
In this section we provide conditions which imply that $(A^kx)_{k\in\N}$ is a Bessel sequence in $\calH$.

\begin{prop}\label{p:suff_bessel}
Let $A$ be a bounded normal operator in $\calH$ and $x\in E(\ol\D)\calH$. If the function $z\mapsto (1 - |z|^2)^{-1}$ lies in $L^1(\mu_x)$, then $(A^kx)_{k\in\N}$ is a Bessel sequence in $\calH$ with Bessel bound $\|(1 - |z|^2)^{-1}\|_{L^1(\mu_x)}$.
\end{prop}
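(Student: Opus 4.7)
The plan is to reduce the problem to showing that $(e_k)_{k\in\N}$ is a Bessel sequence in $L^2(\mu_x)$ with the claimed bound, via Lemma \ref{l:translate}, and then verify this by a direct pointwise Cauchy--Schwarz estimate on partial sums combined with the criterion in Proposition \ref{p:charac_bessel}(ii).

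First, I would observe that the hypothesis $(1-|z|^2)^{-1}\in L^1(\mu_x)$ forces $\mu_x(\T)=0$: the integrand equals $+\infty$ on the unit circle, so any positive mass there would make the integral infinite. Combined with $\supp(\mu_x)\subset\ol\D$, this means $\mu_x$ is concentrated on the open disc $\D$, where $|z|<1$ holds $\mu_x$-almost everywhere. This is the crucial feature that makes the monomials geometrically small.

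Next, given $c=(c_k)_{k\in\N}\in\ell^2(\N)$, set $f_n(z):=\sum_{k=0}^n c_k z^k$. For $\mu_x$-a.e.\ $z$ the Cauchy--Schwarz inequality gives
\begin{equation*}
|f_{n+m}(z)-f_n(z)|^2 \,\le\,\Bigl(\sum_{k=n+1}^{n+m}|c_k|^2\Bigr)\sum_{k=n+1}^{n+m}|z|^{2k} \,\le\,\Bigl(\sum_{k=n+1}^{n+m}|c_k|^2\Bigr)\frac{1}{1-|z|^2}.
\end{equation*}
Integrating against $\mu_x$ and using the hypothesis shows $(f_n)$ is Cauchy in $L^2(\mu_x)$, so $\sum_{k=0}^\infty c_k e_k$ converges there. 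Letting $n=0$ and $m\to\infty$ in the same estimate yields
\begin{equation*}
\Bigl\|\sum_{k=0}^\infty c_k e_k\Bigr\|_{L^2(\mu_x)}^2 \,\le\, \|c\|_2^2\cdot\bigl\|(1-|z|^2)^{-1}\bigr\|_{L^1(\mu_x)}.
\end{equation*}

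By Proposition \ref{p:charac_bessel} (the implication (ii)$\Rightarrow$(i)), this establishes that $(e_k)_{k\in\N}$ is a Bessel sequence in $L^2(\mu_x)$ with Bessel bound $\|(1-|z|^2)^{-1}\|_{L^1(\mu_x)}$. Invoking Lemma \ref{l:translate} then transfers this conclusion to the orbit $(A^kx)_{k\in\N}$ in $\calH$. There is no genuine obstacle here; the only subtle point is the preliminary observation that integrability of $(1-|z|^2)^{-1}$ excludes mass on $\T$, which is precisely what allows the geometric series bound to be used $\mu_x$-a.e.
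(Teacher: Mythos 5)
Your proof is correct, but it reaches the conclusion along a different route than the paper. You transfer the problem to $L^2(\mu_x)$ via Lemma \ref{l:translate} and then bound the \emph{synthesis} operator: a pointwise Cauchy--Schwarz on partial sums $\sum c_kz^k$ plus the geometric series $\sum_k|z|^{2k}=(1-|z|^2)^{-1}$ (valid $\mu_x$-a.e.\ once you have observed, correctly and more explicitly than the paper, that integrability of $(1-|z|^2)^{-1}$ forces $\mu_x(\T)=0$), and you conclude with Proposition \ref{p:charac_bessel}(ii)$\Rightarrow$(i). The paper instead stays in $\calH$ and estimates the \emph{analysis} side directly: for a test vector $y$ it writes $\<y,A^kx\>=\int\ol z^k\,d\mu_{yx}$ and uses the measure-theoretic Cauchy--Schwarz inequality $|\mu_{yx}|(\Delta)\le\sqrt{\mu_x(\Delta)\mu_y(\Delta)}$ from Lemma \ref{l:mu_yx} to get $\bigl(\int|z|^k\,d|\mu_{yx}|\bigr)^2\le\bigl(\int|z|^{2k}\,d\mu_x\bigr)\|y\|^2$, then sums the geometric series under the integral. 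The core computation ($\ell^2$-duality plus $\sum_k|z|^{2k}=(1-|z|^2)^{-1}$) is the same in both arguments; your version has the advantage of needing only the already-established Lemma \ref{l:translate} and elementary pointwise estimates, while the paper's is self-contained within $\calH$ and avoids discussing convergence of the series $\sum c_ke_k$ altogether. One cosmetic slip: ``letting $n=0$ and $m\to\infty$'' in your tail estimate drops the $k=0$ term; you should instead apply the same Cauchy--Schwarz bound to the full partial sum $f_N$ and pass to the limit, which changes nothing of substance.
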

\begin{proof}
Let $y\in\calH$. Then, since $|\mu_{yx}|(\Delta)\le\sqrt{\mu_x(\Delta)\mu_y(\Delta)}$ for any $\Delta\in\frakB$ (see Lemma \ref{l:mu_yx}), we have
\begin{align*}
\sum_{k=0}^\infty\left|\left\<y,A^kx\right\>\right|^2
&= \sum_{k=0}^\infty\left|\int_{\ol\D} \ol z^{k}\,d\mu_{yx}\right|^2\,\le\,\sum_{k=0}^\infty\left(\int_{\ol\D} |z|^{k}\,d|\mu_{yx}|\right)^2\\
&\le \sum_{k=0}^\infty\left(\int_{\ol\D} |z|^{2k}\,d\mu_x\right)\left(\int_{\ol\D}\,d\mu_y\right)\\
&= \left(\int_{\ol\D}\frac{1}{1-|z|^{2}}\,d\mu_x\right)\mu_y(\ol\D)\,\le\,\left\|\frac 1 {1 - |z|^2}\right\|_{L^1(\mu_x)}\|y\|^2.
\end{align*}
This proves the proposition.
\end{proof}

\begin{cor}\label{c:supp}
Let $A$ be a bounded normal operator in $\calH$ and $x\in\calH$. If $\supp(\mu_x)\subset\D$ then $(A^kx)_{k\in\N}$ is a Bessel sequence in $\calH$ with Bessel bound
$$
\frac{\|x\|^2}{1 - \nu^2},
$$
where $\nu = \sup\{|\la| : \la\in\supp(\mu_x)\}$.
\end{cor}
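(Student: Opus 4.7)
The plan is to deduce the corollary directly from Proposition \ref{p:suff_bessel} by showing that, under the support hypothesis, the function $z\mapsto (1-|z|^2)^{-1}$ is not only in $L^1(\mu_x)$ but actually bounded above on $\supp(\mu_x)$ by $(1-\nu^2)^{-1}$.

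The first step is to observe that $\nu < 1$. Since $A$ is a bounded normal operator, its spectrum $\sigma(A)$ is a compact subset of $\C$, and $\supp(\mu_x)\subset\sigma(A)$; being a closed subset of a compact set, $\supp(\mu_x)$ is itself compact. A compact subset of the open disc $\D$ lies at a positive distance from $\T$, so the continuous function $\la\mapsto|\la|$ attains its supremum on $\supp(\mu_x)$ at some point of modulus strictly less than $1$. Hence $\nu<1$ and $1-\nu^2>0$.

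Next, for every $z\in\supp(\mu_x)$ we have $|z|\le\nu$, so $1-|z|^2\ge 1-\nu^2$, and therefore
$$
\frac{1}{1-|z|^2}\,\le\,\frac{1}{1-\nu^2}\quad\text{for }\mu_x\text{-a.e.\ }z\in\C.
$$
Integrating against $\mu_x$ and recalling that $\mu_x(\C)=\|E(\C)x\|^2=\|x\|^2$ gives
$$
\left\|\frac{1}{1-|z|^2}\right\|_{L^1(\mu_x)}\,\le\,\frac{\|x\|^2}{1-\nu^2}<\infty,
$$
so the hypothesis of Proposition \ref{p:suff_bessel} is satisfied and $(A^kx)_{k\in\N}$ is a Bessel sequence with Bessel bound at most $\|x\|^2/(1-\nu^2)$.

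I do not expect any serious obstacle here: the only point that needs a moment's care is the justification that $\nu<1$, which uses compactness of $\supp(\mu_x)$ (via compactness of $\sigma(A)$) together with the fact that $\supp(\mu_x)$ is closed in $\C$. Once that is in place, the rest is a one-line estimate feeding into the already proved Proposition \ref{p:suff_bessel}.
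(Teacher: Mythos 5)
Your proof is correct and follows exactly the route the paper intends: the corollary is stated as an immediate consequence of Proposition \ref{p:suff_bessel}, and your argument simply fills in the (easy) details, namely that $\nu<1$ by compactness of $\supp(\mu_x)\subset\sigma(A)$ and that the pointwise bound $(1-|z|^2)^{-1}\le(1-\nu^2)^{-1}$ on the support integrates to $\|x\|^2/(1-\nu^2)$. The hypothesis $x\in E(\ol\D)\calH$ of the proposition is also trivially satisfied since $\supp(\mu_x)\subset\D$, so there is nothing to add.
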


Recall that by $\Leb_n$ we denote the Lebesgue measure on $\R^n$. Moreover, let $\Lambda_\rho^r$ denote the open annulus in $\C$ with center zero and radii $\rho < r$. The following proposition is a direct consequence of Theorem \ref{t:hardy}.

\begin{prop}\label{p:suff_lip}
Let $A$ be a bounded normal operator in $\calH$ and $x\in E(\ol\D)\calH$ such that $\mu_x|\T$ is Lipschitz continuous with respect to the arc length measure. Assume furthermore that $\mu_x|\Lambda_{1-\veps}^1$ is Lipschitz continuous with respect to $\Leb_2$ for some $\veps\in (0,1)$. Then $(A^kx)_{k\in\N}$ is a Bessel sequence in $\calH$.
\end{prop}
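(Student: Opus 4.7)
The plan is to reduce the assertion to condition (ii) of Theorem~\ref{t:hardy}, namely the existence of a constant $C_1 > 0$ with
$$
\mu_x(\ol\D\cap B_r(z))\,\le\,C_1 r
$$
for every $r>0$ and every $z\in\T$. I would split $\ol\D$ into the three pairwise disjoint Borel pieces $\T$, $\Lambda_{1-\veps}^1$, and $\ol{\D_{1-\veps}}$, and bound the $\mu_x$-mass of $B_r(z)$ on each piece separately; summing the three bounds gives the claim and Theorem~\ref{t:hardy} finishes the argument.

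On $\T$, the first hypothesis gives a constant $L_1 > 0$ with $\mu_x(\T\cap B_r(z))\le L_1\cdot|\T\cap B_r(z)|$, and the arc length of $\T\cap B_r(z)$ is $O(r)$ uniformly in $z\in\T$. On the annulus $\Lambda_{1-\veps}^1$, the $\Leb_2$-Lipschitz hypothesis gives some $L_2>0$ with $\mu_x(\Lambda_{1-\veps}^1\cap B_r(z))\le L_2\Leb_2(B_r(z)) = L_2\pi r^2$, which is $O(r)$ for $r\le 1$; for $r>1$ I would simply dominate by the total mass $\mu_x(\Lambda_{1-\veps}^1)\le\|x\|^2\le\|x\|^2 r$.

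For the innermost piece, the key observation is that for $z\in\T$ the set $\ol{\D_{1-\veps}}$ sits at Euclidean distance at least $\veps$ from $z$, so $\ol{\D_{1-\veps}}\cap B_r(z)$ is empty whenever $r\le\veps$; for $r>\veps$ one falls back on $\mu_x(\ol{\D_{1-\veps}})\le\|x\|^2\le(\|x\|^2/\veps)r$. Adding the three estimates yields the required linear bound in $r$.

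The argument is more of a careful case split than a genuine estimate, so I do not expect any real obstacle. The one subtlety worth watching is making the bounds linear in $r$ \emph{uniformly} over $r>0$; this is why one passes from the quadratic-in-$r$ Lebesgue estimate to the finite total mass as soon as $r$ grows past order~$1$, and similarly why the inner piece is handled by emptiness for small $r$ and by total mass otherwise.
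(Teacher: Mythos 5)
Your proposal is correct and matches the paper's approach: the paper simply asserts that the proposition is ``a direct consequence of Theorem~\ref{t:hardy}'', and your three-piece decomposition of $\ol\D$ into $\T$, $\Lambda_{1-\veps}^1$, and $\ol{\D_{1-\veps}}$ is precisely the routine verification of condition (ii) there that the paper leaves to the reader. The only microscopic point to polish is the inner piece: if $B_r(z)$ is taken closed and $\mu_x$ has an atom on the circle $|w|=1-\veps$, the intersection need not be empty at $r=\veps$ exactly, so phrase it as ``empty for $r<\veps$'' and use the total-mass bound for $r\ge\veps$.
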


\begin{rem}
Note that the sufficient conditions in Proposition \ref{p:suff_bessel} and Proposition \ref{p:suff_lip} cannot be satisfied at the same time if $d\mu_x/d\Leb_2\ge\delta > 0$ on some annulus $\Lambda_{1-\veps}^1$ since $z\mapsto(1 - |z|^2)^{-1}$ is not Lebesgue integrable over $\Lambda_{1-\veps}^1$.
\end{rem}

In the selfadjoint case, the condition in Proposition \ref{p:suff_lip} that $\mu_x$ be Lipschitz continuous with respect to $\Leb_2$ in an annulus $\Lambda_{1-\veps}^1$ means that the support of $\mu_x$ is (compactly) contained in $(-1,1)$. Therefore, the assumption in the next corollary (namely, that $\mu_x$ be Lipschitz continuous with respect to $\Leb_1$ outside an interval $[-1+\veps,1-\veps]$) is weaker. However, it still ensures that $(A^kx)_{k\in\N}$ is a Bessel sequence. Since it more or less directly follows from Theorem \ref{t:sa_charac}, we omit the proof.

\begin{prop}\label{p:sa_suff}
Let $A$ be selfadjoint and $x\in E((-1,1))\calH$. Assume that there exists $\veps > 0$ such that $\mu_x|\Delta_\veps$ is Lipschitz continuous with respect to $\Leb_1$, where $\Delta_\veps := \{t\in (-1,1) : |t| > 1-\veps\}$. Then $(A^kx)_{k\in\N}$ is a Bessel sequence in $\calH$.
\end{prop}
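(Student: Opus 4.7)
The plan is to reduce the statement directly to condition (ii) of Theorem \ref{t:sa_charac}, i.e., to verify that $\mu_x(|t| > 1-\delta) = O(\delta)$ as $\delta\downto 0$. Since $x\in E((-1,1))\calH$, the measure $\mu_x$ is concentrated on $(-1,1)$; in particular $\mu_x(|t|\ge 1) = 0$, so we may identify the set $\{t\in\R : |t| > 1-\delta\}$ (up to a $\mu_x$-null set) with $(-1,-1+\delta)\cup(1-\delta,1)$.

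First I would handle the small-$\delta$ regime. For every $\delta\in (0,\veps)$ the set $(-1,-1+\delta)\cup(1-\delta,1)$ is contained in $\Delta_\veps$, so the Lipschitz hypothesis (with some constant $C>0$) gives
$$
\mu_x(|t|>1-\delta) \,\le\, C\,\Leb_1\big((-1,-1+\delta)\cup(1-\delta,1)\big) \,=\, 2C\delta.
$$
For $\delta\ge\veps$ the trivial bound $\mu_x(|t|>1-\delta)\le\mu_x(\R)=\|x\|^2\le(\|x\|^2/\veps)\delta$ applies. Combining both regimes yields $\mu_x(|t|>1-\delta) = O(\delta)$ as $\delta\downto 0$, and Theorem \ref{t:sa_charac} then delivers the Bessel property of $(A^kx)_{k\in\N}$.

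There is essentially no genuine obstacle: the hypothesis has been tailored to produce exactly the linear-in-$\delta$ bound near the critical points $\pm 1$ identified by Theorem \ref{t:sa_charac}, and the behaviour of $\mu_x$ on the compact subset $[-1+\veps,1-\veps]$ of $(-1,1)$ is irrelevant beyond the total mass bound. The only mild subtlety worth remarking on is that the Lipschitz assumption on the \emph{restriction} $\mu_x|\Delta_\veps$ has to be applied to Borel subsets of $\Delta_\veps$, which is precisely why the reduction $x\in E((-1,1))\calH$ is needed in order to exclude possible mass at $\pm 1$ before invoking the Lipschitz bound.
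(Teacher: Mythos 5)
Your argument is correct and is exactly the route the paper intends: the paper omits the proof, remarking only that the proposition ``more or less directly follows from Theorem \ref{t:sa_charac}'', and your verification of condition (ii) there (Lipschitz bound for $\delta<\veps$, trivial total-mass bound for $\delta\ge\veps$) is the standard way to fill that in.
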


Example \ref{e:discrete} below shows that there are Bessel sequences $(A^kx)_{k\in\N}$ for which $\mu_x|\Delta_\veps$ is not even absolutely continuous with respect to Lebesgue measure for any $\veps\in (0,1)$.

\begin{ex}\label{e:discrete}
Let $\calH = \ell^2 := \ell^2(\N^*)$, $A = \diag(1 - \frac 1 n)_{n\ge 1}$, and $x = \sum_{n=1}^\infty 2^{-n}\delta_n$, where $\delta_n$ denotes the $n$-th standard basis vector of $\ell^2$. Then 
$\mu_x$ is the discrete measure with mass $2^{-2n}$ at $1 - 1/n$, $n\ge 1$. Thus,
$$
\int\frac 1 {1-t^2}\,d\mu_x = \sum_{n=1}^\infty 2^{-2n}\frac{1}{1 - (1-\frac 1 n)^2} = \sum_{n=1}^\infty 2^{-2n}\frac{n^2}{2n-1} < \infty,
$$
and $(A^kx)_{k\in\N}$ is a Bessel sequence by Proposition \ref{p:suff_bessel}.
\end{ex}

\section{Dynamical Sampling of Solutions to the Heat Equation}
In this section we make use of the Fourier transform which we define for functions $f\in L^2(\R)$ by
$$
\calF f := \hat f := \liim_{N\to\infty}\int_{-N}^N e^{2\pi ix\cdot}f(x)\,dx,
$$
where $\liim$ denotes the limit in $L^2(\R)$. We would like to sample bivariate functions $f(t,x)$ at several points in time and space of which we know that they obey the heat equation
$$
\frac{\partial f}{\partial t} - \frac{\partial^2 f}{\partial x^2} = 0.
$$
We assume that for each\footnote{It actually suffices to assume this only for $t=0$.} $t\in [0,\infty)$ the function $f(t,\cdot)$ lies in the space of band-limited functions $PW_{1/2}$, where
$$
PW_a := \left\{g : \R\to\C\,\left|\,g = \int_{-a}^{a}e^{2\pi ix\cdot}\phi(x)\,dx,\,\phi\in L^2(-a,a)\right.\right\}
$$
for $a > 0$. It is a well known fact that each $g\in\BL:=PW_{1/2}$ admits an extension to an entire function and that also $g'\in\BL$. In particular, $\BL\subset H^2(\R)$, where $H^2(\R)$ denotes the Sobolev space in $L^2(\R)$ of second regularity order. Moreover, if we equip $\BL$ with the $L^2$-norm, then we have $\BL\cong L^2(-1/2,1/2)$ via Fourier transform and restriction. In particular, $\BL$ is a Hilbert space\footnote{In fact, $\BL$ is a reproducing kernel Hilbert space with a $\sinc$ kernel.}.

Setting $u(t) := f(t,\cdot)$, the heat equation reduces to $\dot u(t) = Bu(t)$, where
$$
B : \BL\to\BL,\quad Bu := u'',\quad u\in\BL.
$$
As $\wh{u''}(\xi) = -\xi^2\wh u(\xi)$ for $u\in H^2(\R)$, we have indeed $Bu\in\BL$ for $u\in\BL$. If we regard the Fourier transform $\calF$ as an operator from $\BL$ to $L^2(-1/2,1/2)$, we have that
$$
(\calF B\calF^*\phi)(\xi) = -\xi^2\phi(\xi),\quad\phi\in L^2(-1/2,1/2),\;\xi\in (-1/2,1/2).
$$
Therefore (see, e.g., \cite[Theorem VII.5.1]{b}), $B$ is a bounded selfadjoint operator with $\sigma(B) = [-1/4,0]$.

By $\calS$ we denote our sampling scheme, i.e., $\calS\subset [0,\infty)\times\R$ and our samples are $\{f(t,x) : (t,x)\in\calS\}$. For simplicity, we assume that
$$
\calS\,\subset\,\{(k\delta,mi) : k\in\N,\,i\in\Z\},
$$
where $m\in\N^*$ and $\delta > 0$ are fixed. The information that $f$ obeys the heat equation is equivalent to $u(t) = e^{tB}u_0$, $t\ge 0$. Hence, it is our aim to recover $u_0 = u(0) = f(0,\cdot)$. We put $A := e^{\delta B}$, which is a positive definite selfadjoint operator in $\BL$ with $\sigma(A) = [e^{-\delta/4},1]$. Our samples are then of the form
$$
f(k\delta,mi) = \left(u(k\delta)\right)(mi) = \left(e^{k\delta B}u_0\right)(mi) = \left(A^ku_0\right)(mi) = \left\<A^ku_0,T^{mi}\sinc\right\>,
$$
where $T$ denotes the operator that translates a function by one and $\sinc(t) = \sin(\pi t)/(\pi t)$. Setting $\frake_i := T^i\sinc$ for $i\in\Z$, we have $f(k\delta,mi) = \<u_0,A^k\frake_{mi}\>$. Therefore, the function $f$ can be stably recovered from the samples if and only if $(A^k\frake_{mi})_{(k,i)\in\calD}$ is a frame for $\BL$, where $\calD := \{(k,i)\in\N\times\Z : (k\delta,mi)\in\calS\}$. For example, if we only sample at time $t=\kappa\delta$ and at all places $i\in\Z$, we have $(A^k\frake_{mi})_{(k,i)\in\calD} = (A^\kappa\frake_i)_{i\in\Z}$, which is a Riesz basis of $\BL$ since $(\frake_i)_{i\in\Z}$ is an orthonormal basis of $\BL$ and $A$ is invertible.

But let us try to decide whether $(A^k\frake_i)_{k\in\N}$ is a Bessel sequence for some $i\in\Z$. In the space $L^2(-1/2,1/2)$ the operator $A$ corresponds to the operator of multiplication with $f_0(\xi) := e^{-\delta\xi^2}$ and $\frake_i$ corresponds to $e_i = e^{2\pi{\rm i}i\cdot}$. For a Borel set $\Delta\subset [e^{-\delta/4},1]$ we set $\Delta' := (-\frac 1\delta\log\Delta)^{1/2}$ and $\Delta'' := (-\Delta')\cup\Delta'$. Then $E(\Delta)$ corresponds to the operator of multiplication with $\chi_{\Delta''}$. Hence,
$$
\mu_{\frake_i}(\Delta) = \<\chi_{\Delta''}e_i,e_i\> = \int_{\Delta''}|e_i|^2\,d\xi = \Leb_1(\Delta'') = 2\Leb_1(\Delta').
$$
This implies that $\mu_{\frake_i}$ is absolutely continuous with respect to $\Leb_1$. However, since the square root is not Lipschitz continuous on $[0,1/4]$, the measure $\mu_{\frake_i}$ is {\it not} Lipschitz continuous with respect to $\Leb_1$ on any set $(1-\veps,1)$ so that Proposition \ref{p:sa_suff} is not applicable. In fact, the next proposition shows that $(A^k\frake_i)_{k\in\N}$ is not a Bessel sequence.

\begin{prop}
For each $i\in\Z$ the sequence $(A^k\frake_i)_{k\in\N}$ is not a Bessel sequence.
\end{prop}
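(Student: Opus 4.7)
The plan is to apply the characterization of Theorem \ref{t:sa_charac}(iii): for a selfadjoint $A$, the sequence $(A^k\frake_i)_{k\in\N}$ is Bessel iff $\<A^k\frake_i,\frake_i\>=O(k^{-1})$ as $k\to\infty$. So I would compute (or at least asymptotically estimate) the diagonal Gram entries $q_k:=\<A^k\frake_i,\frake_i\>$ and show that they decay at the slower rate $k^{-1/2}$, contradicting the required bound.

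First I would pass to the frequency side. Since $\calF$ is a unitary isomorphism $\BL\to L^2(-1/2,1/2)$ mapping $A$ to multiplication by $f_0(\xi)=e^{-\delta\xi^2}$ and mapping $\frake_i=T^i\sinc$ to the character $e_i(\xi)=e^{2\pi{\rm i}i\xi}$, Plancherel gives
\begin{equation*}
q_k \;=\; \<A^k\frake_i,\frake_i\>_{\BL} \;=\; \int_{-1/2}^{1/2} e^{-k\delta\xi^2}\,|e_i(\xi)|^2\,d\xi \;=\; \int_{-1/2}^{1/2} e^{-k\delta\xi^2}\,d\xi.
\end{equation*}
Notice that this value is independent of $i$, which immediately reduces the problem to a single scalar asymptotic analysis.

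Next I would evaluate the asymptotics of $q_k$ by the substitution $u=\xi\sqrt{k\delta}$, yielding
\begin{equation*}
q_k \;=\; \frac{1}{\sqrt{k\delta}}\int_{-\sqrt{k\delta}/2}^{\sqrt{k\delta}/2} e^{-u^2}\,du \;\longrightarrow\; \sqrt{\frac{\pi}{k\delta}} \quad\text{in the sense that } q_k\sim\sqrt{\pi/(k\delta)}.
\end{equation*}
In particular $kq_k\to\infty$, so $q_k\ne O(k^{-1})$. By Theorem \ref{t:sa_charac} this forces $(A^k\frake_i)_{k\in\N}$ not to be a Bessel sequence, completing the proof.

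There is essentially no hard step here; the only minor technical point is the verification that the Fourier side description of $A$ and $\frake_i$ really yields the stated integral (which is just Plancherel together with the fact that Fourier transform intertwines translation with modulation). A complementary sanity check, which could replace the direct computation, is that the pushforward density of $\mu_{\frake_i}$ under $\xi\mapsto e^{-\delta\xi^2}$ behaves like $(\delta(1-t))^{-1/2}$ near $t=1$, so $\mu_{\frake_i}((1-\veps,1))\sim 2\sqrt{\veps/\delta}$; this again violates the $O(\veps)$ condition in Theorem \ref{t:sa_charac}(ii) and gives the same conclusion via the measure-theoretic route rather than the Gram-matrix route.
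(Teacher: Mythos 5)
Your proof is correct. It rests on the same pillar as the paper's proof --- Theorem \ref{t:sa_charac} --- but invokes condition (iii) (decay of the diagonal Gram entries $\<A^k\frake_i,\frake_i\>$) where the paper invokes condition (ii) (concentration of $\mu_{\frake_i}$ near $t=1$). Concretely, the paper computes $\mu_{\frake_i}((1-\veps,1)) = \tfrac{2}{\sqrt\delta}\bigl(\log\tfrac{1}{1-\veps}\bigr)^{1/2}\sim 2\sqrt{\veps/\delta}$ and observes that this is not $O(\veps)$, whereas you compute $q_k=\int_{-1/2}^{1/2}e^{-k\delta\xi^2}\,d\xi\sim\sqrt{\pi/(k\delta)}$ via the substitution $u=\xi\sqrt{k\delta}$ and observe that this is not $O(k^{-1})$; both computations check out (the passage to the frequency side is exactly the identification the paper has already set up, so nothing extra is needed there). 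The two arguments are in effect Abelian duals of one another: the $\veps^{1/2}$ concentration rate of $\mu_{\frake_i}$ near $1$ is precisely what produces the $k^{-1/2}$ decay of the moments $q_k$. Your route has the small advantages that $q_k$ is manifestly independent of $i$ and that the Gaussian asymptotics are completely elementary; the paper's route avoids any asymptotic analysis of integrals and plugs directly into the formula $\mu_{\frake_i}(\Delta)=2\Leb_1(\Delta')$ derived in the surrounding text. Your closing ``sanity check'' via the pushforward density is essentially the paper's proof verbatim.
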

\begin{proof}
For $\veps\in (0,1-e^{-\delta/4}]$ (i.e., $1-\veps\in [e^{-\delta/4},1)$) let $\Delta_\veps := (1-\veps,1)$. Then
$$
\Delta_\veps' = \left(-\frac 1 \delta\log\Delta_\veps\right)^{1/2} = \left[0,-\frac 1 \delta\log(1-\veps)\right]^{1/2} = \left[0,\frac 1 {\sqrt\delta}\left(\log\frac 1 {1-\veps}\right)^{1/2}\right].
$$
Thus,
$$
\mu_{\frake_i}(\Delta_\veps) = 2\Leb_1(\Delta_\veps') = \frac 2 {\sqrt\delta}\left(\log\frac 1 {1-\veps}\right)^{1/2}.
$$
Suppose that $(A^k\frake_i)_{k\in\N}$ is a Bessel sequence. Then, by Theorem \ref{t:sa_charac}, there exists some $C > 0$ such that $\mu_{\frake_i}(\Delta_\veps)\le C\veps$ for each $\veps\in (0,1)$. Hence, there exists $C > 0$ such that $\log((1-\veps)^{-1})\le C\veps^2$ for each $\veps\in (0,1-e^{-\delta/4}]$. But then $C\ge\veps^{-2}\log((1-\veps)^{-1})\to\infty$ as $\veps\to 0$. A contradiction.
\end{proof}

We conclude that, using a sampling scheme $\calS$ as described above, the only chance for a function $f$ to be recovered in a stable way is when for each $i\in\Z$ the set $\{k\in\N : (k\delta,mi)\in\calS\}$ is finite.

\section{Auxiliary Statements}\label{s:aux}
In the sequel we collect some lemmas from measure theory and operator theory which we make use of in the preceding sections.

\subsection{Measure Theory}\label{ss:aux_meas}
Throughout this subsection, let $(\Omega,\Sigma)$ be a measurable space. We recall the definition of Lipschitz continuity of set functions (see Definition \ref{d:lipschitz}).

\begin{lem}\label{l:liphoelder}
Let $\mu$ and $\nu$ be finite positive measures on $(\Omega,\Sigma)$. Then the following statements are equivalent.
\begin{enumerate}
\item[{\rm (i)}]  $\nu$ is Lipschitz continuous with respect to $\mu$ with constant $C$.
\item[{\rm (ii)}] $\nu\ll\mu$ such that $f = d\nu/d\mu\in L^\infty(\mu)$ with $\|f\|_{L^\infty(\mu)}\le C$.
\end{enumerate}
\end{lem}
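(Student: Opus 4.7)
The plan is to establish the two implications separately, both of which are standard once we recall the Radon–Nikodym theorem.

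For (ii)$\Rightarrow$(i), I would simply integrate: given $\Delta\in\Sigma$, using $d\nu = f\,d\mu$ and $f\le C$ $\mu$-a.e.,
$$
\nu(\Delta) = \int_\Delta f\,d\mu \,\le\, C\mu(\Delta).
$$

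For (i)$\Rightarrow$(ii), absolute continuity is immediate: if $\mu(\Delta)=0$ then $0\le \nu(\Delta)\le C\mu(\Delta)=0$. Hence the Radon–Nikodym theorem supplies a non-negative $f\in L^1(\mu)$ with $d\nu = f\,d\mu$. To obtain the essential bound, I would argue by contradiction: assume $\|f\|_{L^\infty(\mu)} > C$. Then there exists $n\in\N^*$ such that the set $\Delta_n := \{f \ge C + 1/n\}$ satisfies $\mu(\Delta_n) > 0$. But then
$$
\nu(\Delta_n) = \int_{\Delta_n} f\,d\mu\,\ge\,(C + 1/n)\mu(\Delta_n) \,>\, C\mu(\Delta_n),
$$
contradicting the Lipschitz hypothesis. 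Thus $f\le C$ $\mu$-a.e., i.e., $\|f\|_{L^\infty(\mu)}\le C$.

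There is no real obstacle here; the statement is a routine characterization of bounded Radon–Nikodym derivatives. The only subtlety worth flagging is that (i) is phrased as an inequality of set functions (which a priori does not even require $\nu\ll\mu$ as an assumption), so the proof of (i)$\Rightarrow$(ii) must start by deriving absolute continuity from the Lipschitz condition before invoking Radon–Nikodym.
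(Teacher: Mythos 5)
Your proof is correct and follows essentially the same route as the paper: derive $\nu\ll\mu$ from the Lipschitz inequality, apply Radon--Nikodym, and bound the density by testing the inequality on the sets $\Delta_n=\{f\ge C+1/n\}$ (the paper concludes $\mu(\Delta_n)=0$ directly rather than by contradiction, but the computation is identical).
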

\begin{proof}
If $\nu$ is Lipschitz continuous with respect to $\mu$ with constant $C$ then $\nu\ll\mu$ is obvious. Let $f = d\nu/d\mu$ and put $\Delta_n := \{\omega : f(\omega)\ge C+1/n\}$. Then
$$
C\mu(\Delta_n)\,\ge\,\nu(\Delta_n) = \int_{\Delta_n}f\,d\mu\,\ge\,\left(C + \frac 1 n\right)\mu(\Delta_n),
$$
that is, $\mu(\Delta_n) = 0$. Hence, $f\le C$ $\mu$-a.e. which shows $\|f\|_{L^\infty(\mu)}\le C$. The implication (ii)$\Sra$(i) is obvious.
\end{proof}

Clearly, if $\nu,\mu_1,\mu_2$ are measures and $\nu$ is $\sqrt{\mu_1\mu_2}$-Lipschitz continuous, then $\nu$ is absolutely continuous with respect to both measures $\mu_1$ and $\mu_2$. The following lemma shows in particular that the corresponding density functions lie in $L^2$.

\begin{lem}\label{l:lipschitz}
Let $\nu,\mu_1,\mu_2$ be finite positive measures on $(\Omega,\Sigma)$. Then the following statements are equivalent.
\begin{enumerate}
\item[{\rm (i)}]   $\nu$ is $\sqrt{\mu_1\mu_2}$-Lipschitz continuous with constant $C$.
\item[{\rm (ii)}]  $\nu\ll\mu_1$, $d\nu/d\mu_1\in L^2(\mu_1)$, and for any $\Delta\in\Sigma$ we have
$$
\left\|(d\nu/d\mu_1)|\Delta\right\|_{L^2(\mu_1)}^2\le C^2\mu_2(\Delta).
$$
\item[{\rm (iii)}] $\nu\ll\mu_2$, $d\nu/d\mu_2\in L^2(\mu_2)$, and for any $\Delta\in\Sigma$ we have
$$
\left\|(d\nu/d\mu_2)|\Delta\right\|_{L^2(\mu_2)}^2\le C^2\mu_1(\Delta).
$$
\end{enumerate}
\end{lem}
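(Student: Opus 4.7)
By the symmetry between $\mu_1$ and $\mu_2$ in the statement, it suffices to prove the equivalence (i)$\Slra$(ii); the equivalence (i)$\Slra$(iii) then follows verbatim by swapping the roles of $\mu_1$ and $\mu_2$.

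For (ii)$\Sra$(i) the argument is one line: if (ii) holds and $f := d\nu/d\mu_1$, then for any $\Delta\in\Sigma$ the Cauchy--Schwarz inequality in $L^2(\mu_1)$ gives
$$
\nu(\Delta) = \int_\Delta f\,d\mu_1 = \int \chi_\Delta\cdot (f\chi_\Delta)\,d\mu_1\,\le\,\sqrt{\mu_1(\Delta)}\cdot\|f\chi_\Delta\|_{L^2(\mu_1)}\,\le\,C\sqrt{\mu_1(\Delta)\mu_2(\Delta)}.
$$

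The direction (i)$\Sra$(ii) is the main content. Here is the plan. Since $\mu_2$ is finite, (i) yields $\nu(\Delta)\le C\sqrt{\mu_2(\Omega)}\sqrt{\mu_1(\Delta)}$, so $\nu\ll\mu_1$; put $f := d\nu/d\mu_1$. I will first establish, for every $\Delta\in\Sigma$ and every non-negative simple function $g = \sum_{k=1}^n c_k\chi_{\Delta_k}$ (with $\Delta_k$ pairwise disjoint subsets of $\Delta$), the key inequality
\begin{equation}\label{e:key}
\int g\,d\nu\,\le\,C\,\|g\|_{L^2(\mu_1)}\sqrt{\mu_2(\Delta)}.
\end{equation}
Using (i) on each $\Delta_k$ followed by the Cauchy--Schwarz inequality on $\R^n$ applied to the vectors $(c_k\sqrt{\mu_1(\Delta_k)})_k$ and $(\sqrt{\mu_2(\Delta_k)})_k$ gives
$$
\int g\,d\nu = \sum_k c_k\nu(\Delta_k)\,\le\,C\sum_k c_k\sqrt{\mu_1(\Delta_k)\mu_2(\Delta_k)}\,\le\,C\|g\|_{L^2(\mu_1)}\sqrt{\mu_2(\Delta)},
$$
which is \eqref{e:key}. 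By monotone convergence, \eqref{e:key} extends to every non-negative measurable $g$ supported on $\Delta$.

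With \eqref{e:key} in hand, the second and final step is a truncation argument to produce the $L^2(\mu_1)$-bound on $f\chi_\Delta$. Define $f_n := \min(f,n)$. Plugging $g = f_n\chi_\Delta$ into \eqref{e:key} and using $d\nu = f\,d\mu_1$ together with $f_n\le f$,
$$
\int (f_n\chi_\Delta)^2\,d\mu_1\,\le\,\int (f_n\chi_\Delta)\cdot f\,d\mu_1 = \int f_n\chi_\Delta\,d\nu\,\le\,C\|f_n\chi_\Delta\|_{L^2(\mu_1)}\sqrt{\mu_2(\Delta)}.
$$
Dividing by $\|f_n\chi_\Delta\|_{L^2(\mu_1)}$ (if non-zero; otherwise the bound is trivial) yields $\|f_n\chi_\Delta\|_{L^2(\mu_1)}^2\le C^2\mu_2(\Delta)$ uniformly in $n$, and the monotone convergence theorem gives the desired inequality $\|f\chi_\Delta\|_{L^2(\mu_1)}^2\le C^2\mu_2(\Delta)$; in particular (taking $\Delta = \Omega$), $f\in L^2(\mu_1)$.

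The only mildly delicate point is the truncation step, which is needed because a priori we do not know that $f\chi_\Delta\in L^2(\mu_1)$, so we cannot plug $g = f\chi_\Delta$ directly into \eqref{e:key}; the sequence $f_n$ circumvents this while allowing us to recover the full bound via monotone convergence.
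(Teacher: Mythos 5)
Your proposal is correct and follows essentially the same route as the paper: the reverse direction is the same one-line Cauchy--Schwarz argument, and for (i)$\Rightarrow$(ii) both proofs establish the key estimate $\int g\,d\nu\le C\|g\|_{L^2(\mu_1)}\sqrt{\mu_2(\Delta)}$ via Cauchy--Schwarz on the partition sum and then bootstrap it into the $L^2$ bound on $d\nu/d\mu_1$ by monotone convergence. The only cosmetic difference is where the self-improvement happens --- the paper restricts to simple $g\le d\nu/d\mu_1$ and uses $\int g^2\,d\mu_1\le\int g\,(d\nu/d\mu_1)\,d\mu_1$, while you truncate with $f_n=\min(f,n)$ --- but these are interchangeable implementations of the same idea.
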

\begin{proof}
(i)$\Sra$(ii). It is clear that $\nu\ll\mu_1$. Since the measures are finite, we have that $f_1 :=d\nu/d\mu_1\in L^1(\mu_1)$. Let $\Delta\in\Sigma$ be arbitrary. For a simple function $g = \sum_{k=1}^n\alpha_k\chi_{\Delta_k}$, $0\le g\le f_1$, with mutually disjoint $\Delta_k\in\Sigma$, $\Delta_k\subset\Delta$, we have $gf_1\in L^1(\mu_1)$ since $g$ is bounded. Moreover,
\begin{align*}
\int gf_1\,d\mu_1
&= \int g\,d\nu = \sum_{k=1}^n\alpha_k\nu(\Delta_k)\,\le\,C\sum_{k=1}^n\alpha_k\sqrt{\mu_1(\Delta_k)\mu_2(\Delta_k)}\\
&\le\,C\left(\sum_{k=1}^n\alpha_k^2\,\mu_1(\Delta_k)\right)^{1/2}\left(\sum_{k=1}^n\mu_2(\Delta_k)\right)^{1/2}\\
&\le C\left(\int g^2\,d\mu_1\right)^{1/2}\sqrt{\mu_2(\Delta)}\\
&\le C\left(\int gf_1\,d\mu_1\right)^{1/2}\sqrt{\mu_2(\Delta)}.
\end{align*}
That is,
$$
\int gf_1\,d\mu_1\,\le\,C^2\mu_2(\Delta).
$$
Let $(g_n)$ be a sequence of non-negative simple functions on $\Delta$ converging pointwise to $f_1|\Delta$ from below. Then from the above inequality and the monotone convergence theorem we obtain $f_1\in L^2(\mu_1)$ and $\|f_1|\Delta\|_{L^2(\mu_1)}^2\le C^2\mu_2(\Delta)$.

(ii)$\Sra$(i). As above, put $f_1 :=d\nu/d\mu_1$. For any $\Delta\in\Sigma$ we have
$$
\nu(\Delta) = \int_\Delta f_1\,d\mu_1\,\le\,\left(\int_\Delta f_1^2\,d\mu_1\right)^{1/2}\sqrt{\mu_1(\Delta)}\,\le\,C\sqrt{\mu_1(\Delta)\mu_2(\Delta)}.
$$
This is (i). The equivalence (i)$\Slra$(iii) can be proved similarly.
\end{proof}

The {\it Poisson kernel}\label{page:poisson} $P : (\C\setminus\T)\times\T\to\C$ is defined by $P(w,z) := \Re\frac{z+w}{z-w} = \frac{1-|w|^2}{|z-w|^2}$. We have $P(\ol w^{-1},z) = -P(w,z)$. If $\mu$ is a complex Borel measure on $\T$, one defines the {\it Poisson integral} of $\mu$ by
$$
\calP[\mu](w) := \int_\T P(w,z)\,d\mu(z),\qquad w\in\C\setminus\T.
$$
We note that $\calP[\arc](w) = 1$ for each $w\in\C\setminus\T$. The following lemma contains the so-called Stieltjes' inversion formula (for the circle), see, e.g., \cite[Satz 1.1.6]{wo}.

\begin{lem}\label{l:stieltjes}
Let $\mu$ be a complex measure on $\T$ and $\Delta$ an open arc in $\T$ with endpoints $\alpha,\beta\in\T$. Then
$$
\lim_{r\upto 1}\,\int_{\Delta}\calP[\mu](rz)\,d|z| = \mu(\Delta) + \frac 1 2\mu(\{\alpha,\beta\}).
$$
\end{lem}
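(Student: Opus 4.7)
The plan is to exchange the order of integration via Fubini and to reduce the statement to analyzing the pointwise limit of
$$
\Phi_r(\zeta) := \int_\Delta P(rz,\zeta)\,d|z|
$$
as $r\upto 1$, integrated against $\mu$. Since the kernel $P(rz,\zeta)$ is continuous and non-negative on $\Delta\times\T$ for each fixed $r\in(0,1)$ and the total variation $|\mu|$ is finite, Fubini yields
$$
\int_\Delta \calP[\mu](rz)\,d|z| \,=\, \int_\T \Phi_r(\zeta)\,d\mu(\zeta).
$$

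Next I would establish the uniform bound $\Phi_r(\zeta)\le 1$. The elementary identity $|\zeta - rz| = |z - r\zeta|$ for $z,\zeta\in\T$ gives $P(rz,\zeta) = P(r\zeta, z)$, so
$$
\Phi_r(\zeta) \,\le\, \int_\T P(r\zeta,z)\,d|z| \,=\, \calP[\arc](r\zeta) \,=\, 1,
$$
the last equality being the identity noted in the excerpt just before the lemma. This supplies the constant dominating function in $L^1(|\mu|)$ needed to apply the dominated convergence theorem to the right-hand side above.

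The core of the proof is therefore the pointwise limit of $\Phi_r$ on $\T$. Parametrizing $\Delta = \{e^{i\theta} : \theta\in (a,b)\}$ with $0<b-a<2\pi$, $\alpha=e^{ia}$, $\beta=e^{ib}$, and writing $\zeta = e^{i\phi}$, the change of variables $\psi = \theta-\phi$ combined with $|e^{i\phi}-re^{i\theta}|^2 = 1-2r\cos(\theta-\phi)+r^2$ gives
$$
\Phi_r(e^{i\phi}) \,=\, \frac{1}{2\pi}\int_{a-\phi}^{b-\phi} \frac{1-r^2}{1-2r\cos\psi + r^2}\,d\psi.
$$
The integrand is the classical one-dimensional Poisson kernel $P_r(\psi)$, and $\frac{1}{2\pi}P_r$ is a symmetric approximate identity on $[-\pi,\pi]$ as $r\upto 1$. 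From the standard localization properties (mass concentrating at $0$, even symmetry, and vanishing uniformly outside every neighborhood of $0$) one reads off that $\Phi_r(e^{i\phi})\to 1$ whenever $\phi\in(a,b)$, that $\Phi_r(e^{i\phi})\to 0$ whenever $\phi$ lies in the open complementary arc, and --- using the evenness of $P_r$ --- that $\Phi_r(\alpha),\Phi_r(\beta)\to 1/2$.

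Dominated convergence, applied to the real and imaginary parts of $\mu$ separately, then gives
$$
\lim_{r\upto 1}\int_\T\Phi_r(\zeta)\,d\mu(\zeta) \,=\, \mu(\Delta) + \tfrac{1}{2}\mu(\{\alpha,\beta\}),
$$
which is the claim. The main subtle point is the endpoint value $1/2$, which rests squarely on the evenness of the Poisson kernel together with $\Delta$ being a proper arc with distinct endpoints $\alpha\ne\beta$; this is exactly the standard ``half-mass at the boundary'' phenomenon for approximate identities.
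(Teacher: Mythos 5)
Your proof is correct. Note that the paper does not actually prove this lemma; it is quoted from the literature (the reference to Woracek's lecture notes), so there is no in-paper argument to compare against. Your route --- Fubini to pass to $\Phi_r(\zeta)=\int_\Delta P(rz,\zeta)\,d|z|$, the symmetry $P(rz,\zeta)=P(r\zeta,z)$ on $\T\times\T$ to get the uniform bound $\Phi_r\le\calP[\arc](r\zeta)=1$, and then the approximate-identity analysis of the classical kernel $P_r(\psi)$ giving the pointwise limits $1$, $0$, $\tfrac12$ on the open arc, its open complement, and the endpoints respectively, followed by dominated convergence over the Jordan decomposition of $\mu$ --- is the standard and complete way to establish Stieltjes inversion on the circle. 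All the steps check out: Fubini is legitimate because for fixed $r<1$ the kernel is bounded and continuous on $\ol\Delta\times\T$ and $|\mu|$ is finite, and the half-mass endpoint computation correctly uses the evenness of $P_r$ together with the fact that the window $(a-\phi,b-\phi)$ has length strictly less than $2\pi$ and so meets only one concentration point of the periodized kernel. The one hypothesis you use beyond the literal statement is that the arc is proper, i.e.\ $\alpha\neq\beta$ and $0<b-a<2\pi$; you flag this explicitly, and it is clearly the intended reading (for $\Delta=\T\setminus\{\alpha\}$ the endpoint would absorb full rather than half mass). No gaps.
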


\subsection{Operator Theory}\label{ss:aux_op}
Let $A$ be a bounded normal operator and $E$ its spectral measure. Recall that for $x\in\calH$ we defined the space
$$
\calH_x := \ol\linspan\left\{E(\Delta)x : \Delta\in\frakB\right\}.
$$
The following two lemmas should essentially be well known. However, since we couldn't find any of them in the literature, we state and prove them here.

\begin{lem}\label{l:set_equal}
Let $A$ be a bounded normal operator in $\calH$ and $x\in\calH$. Then we have
$$
\ol\linspan\left\{A^kx,\,(A^*)^kx : k\in\N\right\} = \calH_x.
$$
\end{lem}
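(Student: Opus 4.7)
My plan is to establish the two inclusions separately, reducing the nontrivial one to a density question in $L^2(\mu_x)$ via the standard functional model of $\calH_x$ (the isometric isomorphism $L_x:\calH_x\to L^2(\mu_x)$ with $L_x(f(A)x)=f$, already used in the proof of Lemma~\ref{l:translate}).

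The forward inclusion $\ol\linspan\{A^kx,(A^*)^kx:k\in\N\}\subseteq\calH_x$ is the easy half. For each $k\in\N$ one has $A^kx=\int_{\sigma(A)}z^k\,dE(z)x$; approximating $z^k$ uniformly on the compact set $\sigma(A)$ by simple functions $\sum_j\alpha_j\chi_{\Delta_j}$ produces norm approximations of $A^kx$ by vectors $\sum_j\alpha_jE(\Delta_j)x\in\calH_x$, and closedness of $\calH_x$ yields $A^kx\in\calH_x$. The identical argument with $\ol z^k$ in place of $z^k$ gives $(A^*)^kx\in\calH_x$.

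For the reverse inclusion I would transport the question to $L^2(\mu_x)$ via $L_x$: one has $A^kx\leftrightarrow e_k$, $(A^*)^kx\leftrightarrow\ol{e_k}$, and $E(\Delta)x\leftrightarrow\chi_\Delta$, so the claim reduces to the density statement $\ol\linspan\{e_k,\ol{e_k}:k\in\N\}=L^2(\mu_x)$. The natural tool is the Stone-Weierstra\ss\ theorem on the compact spectrum $\sigma(A)\subset\C$, giving uniform density of the polynomial algebra $\C[z,\ol z]$ in $C(\sigma(A))$ and hence, since $\mu_x$ is a finite Borel measure on $\sigma(A)$, density in $L^2(\mu_x)$. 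The step I expect to be the main obstacle is closing the gap between this \emph{algebra} and the linear span $\linspan\{z^k,\ol z^k:k\in\N\}$: the former contains mixed products $z^j\ol z^k$ with $j,k\ge 1$ which the latter does not. The two special cases treated in Section~3 sidestep this gap entirely, since $\supp\mu_x\subseteq\R$ forces $\ol z=z$ and $\supp\mu_x\subseteq\T$ forces $\ol z=z^{-1}$, so that the linear span is already closed under multiplication; but in the general normal case one must additionally show that each mixed product $z^j\ol z^k$ is an $L^2(\mu_x)$-limit of elements of $\linspan\{z^m,\ol z^m\}$. I would approach this by duality, taking $\phi\in L^2(\mu_x)$ orthogonal to every $e_k$ and every $\ol{e_k}$ and aiming for $\phi=0$ $\mu_x$-a.e.\ via a two-variable moment-uniqueness argument on $\supp\mu_x$.
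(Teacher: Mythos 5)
Your forward inclusion matches the paper's argument and is fine. For the reverse inclusion you have put your finger on exactly the right difficulty, but the step you defer --- approximating the mixed monomials $z^j\ol z^{\,k}$ in $L^2(\mu_x)$ by elements of $\linspan\{z^m,\ol z^{\,m}:m\in\N\}$ --- cannot be carried out in general, because the density statement you have (correctly) reduced the lemma to is false. Take $A$ to be multiplication by $z$ on $L^2(\ol\D,\Leb_2)$ and $x=\one$, so that $\mu_x=\Leb_2|\ol\D$ and $\calH_x=L^2(\mu_x)$. A polar--coordinate computation shows that $\{1\}\cup\{z^k\}_{k\ge1}\cup\{\ol z^{\,k}\}_{k\ge1}$ is an orthogonal system in $L^2(\mu_x)$ and that $|z|^2=z\ol z$ is orthogonal to every $z^k$ and every $\ol z^{\,k}$ with $k\ge1$, while its projection onto the constants equals $\tfrac12$; since $\int_{\ol\D}|z|^4\,d\Leb_2=\tfrac\pi3\neq\tfrac\pi4=\int_{\ol\D}\tfrac14\,d\Leb_2$, the vector $AA^*x\in\calH_x$ (corresponding to $|z|^2$) does not lie in $\ol\linspan\{A^kx,(A^*)^kx:k\in\N\}$. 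Conceptually, the closed span of $\{z^k,\ol z^{\,k}\}$ in $L^2(\D,\Leb_2)$ is the harmonic Bergman space, a proper closed subspace; so your proposed duality/moment-uniqueness argument cannot show $\phi=0$, since nonzero annihilators exist.

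You should also know that the paper's own proof commits precisely the error you were worried about: it invokes the Stone--Weierstra\ss\ theorem for the set $\{z\mapsto p(z)+q(\ol z):p,q\text{ polynomials}\}$, which is not an algebra (it is not closed under multiplication), and whose uniform closure on $\ol\D$ consists only of functions harmonic in $\D$. The statement becomes true, and the Stone--Weierstra\ss\ argument legitimate, once the mixed powers are included, i.e.\ $\ol\linspan\{A^j(A^*)^kx : j,k\in\N\}=\calH_x$; it is also true as stated in the unitary and selfadjoint cases, since there $\ol z=z^{-1}$ resp.\ $\ol z=z$ on $\sigma(A)$ and the span is already an algebra --- exactly the observation you made. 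As it happens, Lemma~\ref{l:set_equal} is never invoked elsewhere in the paper, so no downstream result is affected; but the lemma as stated is false for general normal $A$, and neither your proof nor the paper's can be completed.
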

\begin{proof}
Let $k\in\N$ and let $(f_n)$ be a sequence of simple functions converging uniformly to $f(z) = z^k$ on $\sigma(A)$. Then
$$
A^kx = \int \la^k\,dE_\la x = \lim_{n\to\infty}\int f_n\,dEx\,\in\,\calH_x.
$$
Similarly, one shows that $(A^*)^kx\in\calH_x$. For the converse inclusion, we first note that by the Stone-Weierstra\ss\ Theorem, the set $\{z\mapsto p(z) + q(\ol z) : p,q\text{ polynomials}\}$ is dense in $C(\sigma(A))$. Therefore,
$$
\ol\linspan\left\{A^kx,\,(A^*)^kx : k\in\N\right\} = \ol\linspan\{f(A)x : f\in C(\sigma(A))\}.
$$
Let $\Delta\subset\C$ be a closed rectangle and choose a uniformly bounded sequence $(f_n)\subset C(\sigma(A))$ such that $f_n\upto\chi_\Delta$ pointwise. Then it is well known that $f_n(A)x\to E(\Delta)x$ as $n\to\infty$. Thus, $E(\Delta)x\in\ol\linspan\{A^kx,\,(A^*)^kx : k\in\N\}$. Now, it is easy to see that the system
$$
\calD := \left\{\Delta\in\frakB : E(\Delta)x\in\ol\linspan\{A^kx,\,(A^*)^kx : k\in\N\}\right\}
$$
is a Dynkin system. And since it contains the $\pi$-system of the closed rectangles, it coincides with $\frakB$.
\end{proof}

Recall that for $x,y\in\calH$ we defined the measures $\mu_{yx}$ and $\mu_x$ by $\mu_{yx}(\Delta) = \<E(\Delta)y,x\>$ and $\mu_x(\Delta) = \<E(\Delta)x,x\>$, $\Delta\in\frakB$.

\begin{lem}\label{l:mu_yx}
Let $A$ be a bounded normal operator in $\calH$ and $x,y\in\calH$. Then the following statements hold.
\begin{enumerate}
\item[{\rm (i)}]  We have
\begin{equation}\label{e:mu_yx}
|\mu_{yx}|(\Delta)\,\le\,\sqrt{\mu_y(\Delta)\mu_x(\Delta)},\qquad\Delta\in\frakB,
\end{equation}
and $f_y := d\mu_{yx}/d\mu_x\in L^2(\mu_x)$ satisfies
$$
\int_\Delta f_y\,dEx = P_xE(\Delta)y,\qquad\Delta\in\frakB,
$$
where $P_x$ denotes the orthogonal projection onto $\calH_x$.
\item[{\rm (ii)}] The operator $L_x : \calH_x\to L^2(\mu_x)$, defined by
$$
L_xy := f_y,\qquad y\in\calH_x,
$$
is an isometric isomorphism between $\calH_x$ and $L^2(\mu_x)$.
\end{enumerate}
\end{lem}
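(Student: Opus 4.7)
My plan for (i) is to first establish the total-variation inequality (\ref{e:mu_yx}) via Cauchy--Schwarz, and then use it together with Lemma~\ref{l:lipschitz} to obtain $f_y \in L^2(\mu_x)$. Fix $\Delta \in \frakB$ and a countable measurable partition $\Delta = \bigsqcup_k \Delta_k$. Using that $E(\Delta_k)$ is an orthogonal projection, each $\mu_{yx}(\Delta_k) = \<E(\Delta_k) y, E(\Delta_k) x\>$. Two applications of Cauchy--Schwarz, combined with $\|E(\Delta_k) z\|^2 = \mu_z(\Delta_k)$, yield $\sum_k |\mu_{yx}(\Delta_k)| \le \sqrt{\mu_y(\Delta) \mu_x(\Delta)}$; taking the supremum over partitions gives (\ref{e:mu_yx}). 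In particular $\mu_{yx} \ll \mu_x$, so Radon--Nikodym supplies $f_y \in L^1(\mu_x)$ with $d\mu_{yx} = f_y\,d\mu_x$, and since $d|\mu_{yx}|/d\mu_x = |f_y|$, Lemma~\ref{l:lipschitz} applied with $\nu = |\mu_{yx}|$, $\mu_1 = \mu_x$, $\mu_2 = \mu_y$ upgrades this to $f_y \in L^2(\mu_x)$.

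For the vector identity $\int_\Delta f_y\,dEx = P_x E(\Delta) y$ I would test both sides against arbitrary $z \in \calH$ after splitting $\calH = \calH_x \oplus \calH_x^\perp$. For $z \in \calH_x^\perp$ the measure $\mu_{xz}$ vanishes identically, so $\<\int_\Delta f_y\,dEx, z\> = \int_\Delta f_y\,d\mu_{xz} = 0 = \<P_x E(\Delta) y, z\>$. For $z = E(\Delta') x$ with $\Delta' \in \frakB$ both sides reduce to $\mu_{yx}(\Delta \cap \Delta')$ via
$$
\Big\<\int_\Delta f_y\,dEx,\, E(\Delta') x\Big\> = \int_{\Delta \cap \Delta'} f_y\,d\mu_x = \<E(\Delta \cap \Delta') y, x\> = \<P_x E(\Delta) y, E(\Delta') x\>.
$$
Linearity and the density of $\linspan\{E(\Delta') x : \Delta' \in \frakB\}$ in $\calH_x$ then extend the identity to all of $\calH$.

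For (ii) the map $L_x$ inherits linearity from the linearity of $y \mapsto \mu_{yx}$. Specialising the identity from (i) to $\Delta = \Omega$ gives $\int f_y\,dEx = P_x y$, which for $y \in \calH_x$ is $y$ itself, so the standard $L^2$-isometry of the spectral integral yields $\|y\|^2 = \|f_y\|_{L^2(\mu_x)}^2$. For surjectivity, given $g \in L^2(\mu_x)$ I would set $y := \int g\,dEx \in \calH_x$ and compute $\mu_{yx}(\Delta) = \<E(\Delta) \int g\,dEx, x\> = \int_\Delta g\,d\mu_x$, so that $L_x y = g$. I expect the main subtlety to be step two: correctly interpreting $\int_\Delta f_y\,dEx$ for $f_y$ merely in $L^2(\mu_x)$ (rather than $L^\infty$) and then aligning it with $P_x E(\Delta) y$ via the $\calH_x \oplus \calH_x^\perp$ splitting; the remaining ingredients are routine applications of Radon--Nikodym and the spectral calculus.
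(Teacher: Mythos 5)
Your argument is correct and, for part (i) and the isometry in (ii), follows the paper's proof essentially step for step: the same double Cauchy--Schwarz estimate over partitions for \eqref{e:mu_yx}, the same appeal to Lemma~\ref{l:lipschitz} (applied, as you correctly make explicit, to $|\mu_{yx}|$) to get $f_y\in L^2(\mu_x)$, and the same strategy of verifying $\int_\Delta f_y\,dEx = P_xE(\Delta)y$ by testing against $\calH_x^\perp$ and against vectors $E(\Delta')x$ --- you merely write out the density step that the paper compresses into ``it is easily seen.'' The one genuine divergence is surjectivity in (ii): the paper argues by duality, showing that any $f\in L^2(\mu_x)$ orthogonal to all $f_y$ satisfies $f(A)x=0$ and hence $\|f\|_{L^2(\mu_x)}=\|f(A)x\|=0$, so the (closed, by isometry) range is dense; you instead exhibit a preimage directly by setting $y:=\int g\,dEx\in\calH_x$ and computing $\mu_{yx}(\Delta)=\int_\Delta g\,d\mu_x$, so $L_xy=g$. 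Your route is slightly more constructive and arguably cleaner, since it does not need the closed-range observation; it does require the (true, but worth a sentence) fact that $\int g\,dEx\in\calH_x$ for unbounded $g\in L^2(\mu_x)$, e.g.\ by truncating $g$ and passing to the limit.
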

\begin{proof}
(i). Let $\Delta\in\frakB$. Then (``m.d.'' standing for ``mutually disjoint'')
$$
|\mu_{yx}|(\Delta) = \sup\left\{\sum_{i=1}^n\left|\left\<E(\Delta_i)y,E(\Delta_i)x\right\>\right| : \Delta_i\in\frakB(\Delta)\text{ m.d.}\right\}.
$$
Applying the Cauchy-Schwarz inequality consecutively on both scalar products $\aproduct$ and $\aproduct_{\C^n}$, we obtain
\begin{align*}
|\mu_{yx}|(\Delta)
&\le \sup\left\{\left(\sum_{i=1}^n\mu_y(\Delta_i)\right)\left(\sum_{i=1}^n\mu_x(\Delta_i)\right) : \Delta_i\in\frakB(\Delta)\text{ m.d.}\right\}^{1/2}\\
&= \sqrt{\mu_y(\Delta)\mu_x(\Delta)},
\end{align*}
which is \eqref{e:mu_yx}. This and Lemma \ref{l:lipschitz} imply $f_y = d\mu_{yx}/d\mu_x\in L^2(\mu_x)$. We have
$$
\<E(\Delta)y,x\> = \mu_{yx}(\Delta) = \int_\Delta f_y\,d\mu_x = \<(\chi_\Delta f_y)(A)x,x\>,\qquad\Delta\in\frakB.
$$
From this it is easily seen that also
$$
\<E(\Delta)y,z\> = \<(\chi_\Delta f_y)(A)x,z\>,\qquad\Delta\in\frakB,
$$
for all $z\in\calH_x$. This proves
$$
P_xE(\Delta)y = (\chi_\Delta f_y)(A)x = \int_\Delta f_y\,dEx.
$$
(ii). The linearity of the operator $L_x$ is inherited from the linearity of $y\mapsto\mu_{yx}$. If $y\in\calH_x$, we observe that
$$
\|L_x y\|^2 = \int |f_y|^2\,d\mu_x = \left\|\int f_y\,dEx\right\|^2 = \|P_xy\|^2 = \|y\|^2.
$$
Hence, $L_x$ is isometric. In order to see that $L_x$ is surjective, let $f\in L^2(\mu_x)$ such that $\<f,f_y\> = 0$ for all $y\in\calH_x$. Then $x\in\dom(f(A))$ and $\<f(A)x,y\> = \int f\,d\mu_{xy} = \int f\ol{f_y}\,d\mu_x = \<f,f_y\> = 0$ for all $y\in\calH_x$. Thus, $f(A)x = 0$. 
This implies
$$
\|f\|_{L^2(\mu_x)}^2 = \int |f|^2\,d\mu_x = \left\|\int f\,dEx\right\|^2 = \|f(A)x\|^2 = 0,
$$
and thus $f = 0$.
\end{proof}

\
\\
{\bf Acknowledgement.} The author gratefully acknowledges support from MinCyT Argentina under grant PICT-2014-1480. Moreover, he would like to thank A. Aldroubi, C. Cabrelli, U. Molter, and V. Paternostro for fruitful discussions.

\vspace*{.24cm}

\end{document}